\documentclass[11pt]{amsart}
\usepackage{euscript}
\usepackage{amssymb}
\usepackage{amsmath}
\usepackage{enumitem}
\usepackage{epic}
\usepackage{graphics}
\usepackage{epsfig}
\usepackage{color}

\usepackage{parskip}
\usepackage{amscd,euscript}
\usepackage[frame,cmtip,curve,arrow,matrix,line,graph]{xy}
\usepackage{setspace}
\usepackage{tikz}
\usepackage{tikz-cd}

\tikzset{
	cross/.pic = {
		\draw[rotate = 45] (-#1,0) -- (#1,0);
		\draw[rotate = 45] (0,-#1) -- (0, #1);
	}
}

\usepackage[breaklinks,colorlinks,citecolor=teal,linkcolor=teal,urlcolor=teal,pagebackref,hyperindex]{hyperref}
\usepackage[margin=1.2in]{geometry}

\newtheoremstyle{my}{1.5em}{0.5em}{\em}{}{\sc}{.}{0.5em}{}

\newtheorem{theorem}{Theorem}[section]
\newtheorem*{theorem*}{Theorem}
\newtheorem{thm}{Theorem}[section]

\newtheorem*{Theorem*}{Theorem}

\newtheorem{corollary}[thm]{Corollary}
\newtheorem*{corollary*}{Corollary}

\newtheorem*{prop*}{Proposition}

\newtheorem*{conjecture*}{Conjecture}

\newtheorem*{question*}{Question}
\newtheorem{defn}[thm]{Definition}

\newtheorem*{definitions*}{Definitions}

\newtheorem*{rem*}{Remark}
\newtheorem{Remark}[thm]{Remark}

\newtheorem*{remark*}{Remark}

\newtheorem*{remarks*}{Remarks}
\newtheorem*{example*}{Example}

\newtheorem*{examples*}{Examples}

\newtheorem*{convention*}{Convention}
\newtheorem*{conventions*}{Conventions}

\newtheorem*{exercise*}{Exercise}
\newtheorem*{bibliographical-note*}{Bibliographical note}

\newtheorem{cor}[thm]{Corollary}
\newtheorem{lemma}[thm]{Lemma}

\newtheorem{remark}[thm]{Remark}

\newcommand{\scrH}{\EuScript{H}}

\newcommand{\scrA}{\EuScript{A}}

\newcommand{\scrX}{\EuScript{X}}

\newcommand{\scrS}{\EuScript{S}}

\newcommand{\bA}{\mathbb{A}}

\newcommand{\bR}{\mathbb{R}}
\newcommand{\bZ}{\mathbb{Z}}
\newcommand{\bQ}{\mathbb{Q}}
\newcommand{\bC}{\mathbb{C}}
\newcommand{\bN}{\mathbb{N}}
\newcommand{\bP}{\mathbb{P}}

\newcommand{\Sym}{\mathrm{Sym}}

\renewcommand{\hbar}{\overline{\frak{h}}}
\newcommand{\Aut}{\mathrm{Aut}}

\newcommand{\bK}{\mathbb{K}}

 \numberwithin{equation}{section}

\renewcommand{\leq}{\leqslant}
\renewcommand{\geq}{\geqslant}

\newcommand{\C}{\mathbb C}

\newcommand{\bT}{\mathbb{T}}

\newcommand{\ev}{\operatorname{ev}}

\newcommand{\Spec}{\operatorname{Spec}}

\newcommand{\Gr}{\mathrm{Gr}}

\renewcommand{\sc}{\operatorname{sc}}

\newcommand{\Mod}{\operatorname{Mod}}

\title[A Stably Degenerate Singularity]{Isolated hypersurface singularities may be \\ stably degenerate}
\author{Mark McLean}
\address{Mark McLean, Department of Mathematics, Stony Brook University, Stony Brook, NY 11794-3651, USA.}
\email{mark.mclean@stonybrook.edu}
\author{Ivan Smith}
\address{Ivan Smith, Centre for Mathematical Sciences, University of Cambridge, Wilberforce Road, Cambridge CB3 0WB, U.K.}
\email{is200@cam.ac.uk}

\begin{document}

\begin{abstract}
	We prove the existence of stably degenerate hypersurface singularities. Ingredients in the proof include the mixed Hodge structure on the contact locus of a singularity and that  moduli spaces of polarised abelian varieties of large dimension are of general type.
\end{abstract}

\maketitle
\begingroup
\setlength{\parskip}{0pt} 
{ \setcounter{tocdepth}{4}
	\setcounter{secnumdepth}{4}
	\hypersetup{linkcolor=black}
	\tableofcontents
}
\endgroup

\section{Introduction}

A \emph{stably degenerate hypersurface singularity} is a hypersurface singularity whose associated defining polynomial is not stably equivalent to a Newton non-degenerate polynomial over $\bC$ (See Definitions \ref{defnnewtonnondegenerate}, \ref{defnequivalence}, \ref{defnstabilisation} and \ref{defnstablyequivalent}).

In this paper we will prove the following theorem.

\begin{theorem} \label{maintheorem}
	There exists a stably degenerate isolated hypersurface singularity.
\end{theorem}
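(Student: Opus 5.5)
The plan is to find an invariant of isolated hypersurface singularities that satisfies three conditions. First, it is preserved under stable equivalence, that is, under analytic change of coordinates and under adding squares of new variables. Second, it is severely restricted for Newton non-degenerate polynomials. Third, it can be made to violate that restriction. The abstract points to the invariant: the mixed Hodge structure on the cohomology of the contact loci $\mathcal{X}_m$. Recall that $\mathcal{X}_m$ is the space of $m$-jets of arcs $\gamma$ with $\gamma(0)=0$ and $\operatorname{ord}_t f(\gamma)=m$, possibly together with the angular component map to $\bC^*$. These are intrinsic to the singularity up to contact equivalence. Adding squares $f+z_1^2+\dots+z_k^2$ changes them only by a controlled, Hodge-theoretically explicit operation: fibrewise, a quadric cone/affine-bundle factor, or a Thom–Sebastiani-type join. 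So a suitable Hodge-theoretic feature of $H^*(\mathcal{X}_m)$, such as the weight-graded pieces up to Tate twists and Tate summands, is a stable invariant.

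Second, for a Newton non-degenerate $f$ I would compute the contact loci via the Newton polyhedron, stratifying arcs by their order vectors $(\operatorname{ord}\gamma_1,\dots,\operatorname{ord}\gamma_n)$. Each stratum is a product of affine spaces, tori, and the hypersurface $\{f_\sigma = c\}\subset(\bC^*)^n$ for a face $\sigma$. Non-degeneracy makes each $\{f_\sigma=c\}$ a smooth very affine hypersurface, a complement of a nondegenerate hypersurface in a toric variety. Hence its mixed Hodge structure has pure pieces that are built from the cohomology of smooth toric hypersurfaces. In particular, any weight-graded piece of $H^*(\mathcal{X}_m)$ has Hodge numbers governed by Danilov–Khovanskii. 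The key consequence I would aim for is a structural statement: the pure Hodge structures appearing are, up to Tate twists, subquotients of cohomology of nondegenerate hypersurfaces in toric varieties. These come in families parametrised by coefficients, which are rational varieties. Consequently, the "period data" realised in any given dimension and Newton polytope range over images of unirational parameter spaces.

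Third, I would construct a degenerate singularity whose contact locus carries, say, the weight-one piece $H^1$ of a very general principally polarised abelian variety $A$ of large dimension $g$. For example, take $f = F(x_1,\dots,x_n)+(\text{high-order terms})$ with $F$ homogeneous such that the projective hypersurface, or a suitable curve or cover built from it, has a Jacobian or intermediate Jacobian isogenous to a very general member of an $\mathcal{A}_g$-family. The relevant contact locus $\mathcal{X}_{d}$ is essentially the affine Milnor fibre $\{F=1\}$. Then I would argue by counting. The Newton non-degenerate singularities in stable equivalence range over a countable union of families, indexed by Newton polyhedra and numbers of variables, each with unirational parameter space. So the abelian varieties obtainable from them, via Hodge substructures of their contact loci, lie in a countable union of images of unirational varieties in $\mathcal{A}_g$. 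Since $\mathcal{A}_g$ is of general type for $g\ge 7$ (Tai, Freitag, Mumford), such images cannot dominate $\mathcal{A}_g$, because a unirational variety cannot dominate a variety of general type. A very general $A$ therefore avoids them, and the corresponding singularity is stably degenerate.

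The main obstacle is the first step together with the reduction to finitely many parameters. I need to show that stabilisation only adds Tate/explicit pieces and does not create new weight-one abelian-type pieces. I also need to show that the unbounded number of variables allowed by stable equivalence does not escape the countable-unirational bound. I would handle the latter by bounding the Newton polyhedron in terms of the Milnor number, which is a stable invariant, so that only finitely many polyhedra occur for each value. I would handle the former by an explicit computation of contact loci of $f+\sum z_i^2$ as fibrations over those of $f$ with fibres whose cohomology is Tate.
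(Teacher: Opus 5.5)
Your global strategy is the paper's: a weight-graded piece of the mixed Hodge structure on a contact locus, which for $f=F+h$ and all its stabilisations equals $H^{2g-1}$ of a very general abelian variety $A$ up to Tate twist; countably many rational families of Newton non-degenerate polynomials; non-uniruledness of moduli of abelian varieties; and countability. The step that links these, however, fails as you set it up.

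\textbf{The summand problem.} In your second and third steps you pass to \emph{subquotients} of the cohomology of individual strata $\{f_\sigma=c\}\cap(\bC^*)^n$. You then assert that the abelian varieties obtained ``via Hodge substructures'' lie in images of unirational varieties in $\scrA_g$. That does not follow. A simple Hodge summand of one fibre of a variation over a rational base $T$ need not extend to a sub-variation over $T$, so it defines no map from $T$ to any moduli space. The locus where such a summand persists is a Hodge locus $T'\subset T$. This is an algebraic subvariety with no reason to be rational or unirational, so non-uniruledness says nothing about its image.

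What is needed, and what the paper proves, has two parts. First, the \emph{entire} graded piece of the \emph{entire} contact locus must form a variation over the family $T$ of Newton non-degenerate polynomials with fixed Newton polyhedron and bounded degree, which is a Zariski open subset of an affine space over $\bQ$. This is Lemma~\ref{lemmaTopTrivual}: a simultaneous toric log resolution, local triviality on each $\scrX(\bT)$-orbit stratum, then Corollary~\ref{corollarystratifiedfibration} to reassemble the strata. Second, at the given point that piece must \emph{equal} the abelian Hodge structure, not merely contain it. This is the exact computation of Lemma~\ref{lemmafkHodgeComputation}, done for every $f_k$.

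Your order-vector stratification could replace the toric resolution in proving local constancy, but only if you reassemble the strata rather than pass to subquotients. You also need algebraicity of the period map (Cattani--Deligne--Kaplan) before you can speak of its image as a unirational subvariety.

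\textbf{The target moduli space.} The integral graded piece with its polarisation is in general not principally polarised, and it is only isogenous to $H^1(A)$. So the period map lands in some $\scrA_{g,\delta}$. Hecke correspondences to $\scrA_g$ are only finite correspondences and do not carry rational curves to rational curves, so general type of $\scrA_g$ for $g\geq 7$ is not enough. The paper instead:
\begin{itemize}
\item uses Tai's theorem that every $\scrA_{g,\delta}$ is of general type for $g\geq 16$;
\item takes $A$ of N\'eron--Severi rank one, so that isogenies to $A$ are automatically polarised;
\item excludes Hecke images both of rational subvarieties and of the countably many constant values of $\bQ$-families (Lemma~\ref{lemmaconditionsforA}).
\end{itemize}

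\textbf{Smaller points.}
\begin{itemize}
\item Realise $A$ by a hypersurface birational to $A$ itself. Then the relevant piece is exactly $H^{2g-1}(A)$, by birational invariance of the top odd weight piece of $H^*_c$. A Jacobian or intermediate Jacobian either misses a very general $A$ or contains it only as an isogeny factor, which brings back the summand problem above.
\item The stabilised contact locus does not fibre over $\chi_f$. Arcs where $\operatorname{ord} f(\gamma)>d$ and the quadratic part supplies the $t^d$ term do not map to $\chi_f$. In fact the abelian piece of $\chi_{f_k}=\chi''_{f_k}-\chi'_{f_k}$ comes from the stratum of $\chi'_{f_k}$ where the $u$-jet vanishes to order $\geq d$, which is an affine space times $\{F=0\}$ (Lemmas~\ref{lemmaStrataFibration}, \ref{lemmaAlluVanishComputation}).
\item The obstacle you single out, bounding Newton polyhedra via the Milnor number, is unnecessary. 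The families over all numbers of variables, polyhedra and degree bounds are countable, and your countable-union argument already does the job of the paper's $\scrS(\bQ)$ device.
\end{itemize}
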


This answers a problem, originally stated by Arnol'd in 1975, asking if there are stably degenerate isolated hypersurface singularities (see \cite[Problem 3]{Arnold1983} and \cite[Problems 1975-3, 1976-8]{Arnold2005}). (It is well-known that there are isolated hypersurface singularities\footnote{For an example, consider $f(x,y)=x^2y^2(x+y)^2+ x^7+y^7$.
	The singular locus of the leading homogeneous part in any coordinate system always contains three distinct lines, so must  intersect the open orbit $(\bC^*)^2$.} which are degenerate in any local analytic co-ordinates, if one does not allow stabilisation.)

Briefly, our example has the following shape: we take $f=F+h$, where $F$ is homogenous of degree $d$ defining a hypersurface in projective space birational to a sufficiently general abelian variety $A$ of high dimension, and $h$ is a polynomial (for instance of Fermat type) all of whose monomials are of degree greater than the degree of $F$ and such that $f$ has an isolated singularity.

To show our example is stably degenerate, we show that a weight graded piece of the mixed Hodge structure of the $d$-contact locus of a stabilisation of $f$ (Definition \ref{defnmodifiedcontactlocus}) corresponds to a polarised abelian variety $A'$ isogenous to $A$  (Lemma \ref{lemmafkHodgeComputation}).
Suppose, for a contradiction, a stabilisation of $f$ was Newton non-degenerate after a change of coordinates. Then $f$ would sit inside the family $(f_\tau)_{\tau \in T}$ of Newton non-degenerate singularities
with the same Newton polyhedron and with a large degree bound.
Taking an appropriate graded piece of the mixed Hodge structure of the $d$-contact locus would give us a map from $T$ to a space of  polarised abelian varieties of high dimension (See Lemma \ref{lemmaTopTrivual}). Such a map must be constant since $T$ is rational and rational curves cannot pass through the (chosen to be sufficiently general) point corresponding to $A'$ in that moduli space.
But this contradicts the fact that there can only be countably many abelian varieties that are the image of such a constant map (cf.  Lemma \ref{lemmaconditionsforA}).

Our argument is essentially an existence theorem, though we briefly discuss how one might (impractically!) algorithmically find a concrete example of a stably degenerate singularity in Section \ref{sec:algorithm}. Theorem \ref{maintheorem}, taken together with \cite[Corollary 3.2]{stevens2021conjectures}, implies that there must exist some cubic polynomial (perhaps in very many variables) which is stably degenerate.

Stevens' article \cite{stevens2021conjectures} gives a survey on Arnol'd's question. Stevens conjectured that, in characteristic zero, a singularity of Milnor number $\mu$ whose $\mu=\mathrm{constant}$ stratum was globally singular would have generic member  being stably degenerate. Our argument also uses a global geometric feature of a  moduli space related to the singularity, but for us the input is non-uniruledness rather than non-smoothness.

We work throughout over the field $\bC$.

\noindent \emph{Acknowledgements.} The authors are grateful to Morihiko Saito, Tony Scholl and Burt Totaro for helpful discussions.  M.M. is partially supported by NSF grant DMS-2203308. I.S. is partially supported by UKRI Frontier Research grant EP/X030660/1 (in lieu of an ERC advanced grant).

\section{Basic Definitions}

\begin{defn} \label{defnnewtonnondegenerate} \cite[Sections 6.2.1 and 6.2.2]{arnold2012singularities}.
	Let $z_1,\cdots,z_N$ be the standard coordinates on $\bC^N$.
	For each tuple $\nu = (\nu_1,\cdots,\nu_N) \in \bN^N$, define $z^\nu := \prod_{j=1}^N z_j^{\nu_j}$.
	Let
	\begin{equation}
		g : \C^N \to \C, \quad g = \sum_{\nu \in S} a_\nu z^\nu
	\end{equation}
	be a polynomial where $S \subset \bN^N$ is a finite set and $(a_\nu)_{\nu \in S}$ are elements of $\bC^*$.
	For any subset $S' \subset \bR^N$, we define $g_{S'} := \sum_{\nu \in S \cap S'} a_\nu z^\nu$.
	We define the \emph{Newton polyhedron $\Gamma_+(g)$ of $g$} to be the convex hull of the subsets $\nu + \bR_{\geq 0}^N \subset \bR^N$, $\nu \in S$.
	A \emph{face} of $\Gamma_+(g)$ is a subset of the form \begin{equation}
		\Delta_L := \left\{x \in \Gamma_+(f) \ : \ L(x) = \inf_{y \in \Gamma_+(f)} L(y) \right\}
	\end{equation}
	where $L : \bR^N \to \bR$ is linear.
	We say that $g$ is \emph{Newton non-degenerate}
	if $dg_\Delta$ does not vanish at any point in $(\bC^*)^N$ for each compact face $\Delta \subset \Gamma_+(g)$.
\end{defn}

\begin{remark}
	The notion of being Newton non-degenerate depends strongly on the choice of co-ordinates.
\end{remark}

\begin{remark}
	The polynomial $g$ is Newton non-degenerate iff, in the toric variety associated with a regular unimodular subdivision of the normal fan of its Newton polyhedron, the strict transform of the hypersurface $\{g=0\}$ meets every torus orbit over $0 \in \bC^N$ transversely.
\end{remark}

\begin{defn} \label{defnequivalence}
	Let $f, g : \C^N \to \C$ be two polynomials.
	Let $\widehat{f}, \widehat{g}$ be the corresponding elements of the completion $R := \C[[z_1,\cdots,z_N]]$.
	We say that $f$ is \emph{equivalent} to $g$ if there is an automorphism $\Phi$ of $R$ fixing the ideal $(z_1,\cdots,z_N)$ sending $\widehat{f}$ to $\widehat{g}$.
\end{defn}

\begin{defn} \label{defnstabilisation}
	Let $g : \bC^N \to \bC$ be a polynomial.
	For each $k \in \bN$, define a \emph{$k$-stabilisation} of $g$ to be a polynomial:
	\begin{equation}
		g_Q : \bC^k \times \bC^N \to \bC, \quad g_Q(w_1,\cdots, w_k,z_1,\cdots,z_N) = Q(w_1,\cdots,w_k) + g(z_1,\cdots,z_N)
	\end{equation}
	where $Q$ is a non-degenerate quadratic form in $k$ variables.
	A \emph{stabilisation} of $g$ is a $k$-stabilisation of $g$ for some $k \in \bN$.

	For each $k \in \bN$, define:
	\begin{equation}
		g_k : \bC^k \times \bC^k \times \bC^N \to \bC, \quad g_k(u,v,z) := uv + g(z)
	\end{equation}
	where $uv$ means the dot product.
\end{defn}

Since all non-degenerate quadratic forms are equivalent, we have the following lemma.
\begin{lemma} \label{equivalentstandard}
	Any two $k$-stabilisations of a polynomial are equivalent for each $k \in \bN$.
	In particular, the $2k$-fold stabilisation of any polynomial $g$ is equivalent to $g_k$ for each $k \in \bN$.
\end{lemma}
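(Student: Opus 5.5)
The plan is to realise each equivalence as an explicit linear change of coordinates. Such a change induces an automorphism of $R = \bC[[w_1,\dots,w_k,z_1,\dots,z_N]]$ that fixes the maximal ideal, which is exactly what Definition \ref{defnequivalence} requires.

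First I will record the classical fact about quadratic forms over $\bC$. Any non-degenerate quadratic form $Q$ in $k$ variables is carried by some $A \in GL_k(\bC)$ to the sum of squares $w_1^2+\dots+w_k^2$. This follows from Gram--Schmidt diagonalisation of the associated symmetric bilinear form: non-degeneracy makes every diagonal entry nonzero, and since $\bC$ is algebraically closed each entry can be rescaled to $1$ by taking square roots.

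Next, let $Q_1$ and $Q_2$ be non-degenerate quadratic forms in $k$ variables, and choose $A \in GL_k(\bC)$ with $Q_1(Aw) = Q_2(w)$. Define $\Phi$ on $R$ by $w \mapsto Aw$ and $z \mapsto z$. This $\Phi$ is a linear substitution with linear inverse, so it is an automorphism of $R$ preserving the ideal $(w,z)$. It sends $\widehat{Q_1 + g}$ to $\widehat{Q_2 + g}$, which proves the first claim.

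For the second claim, note that $g_k = uv + g$ is the $2k$-stabilisation $g_Q$ with $Q(u,v) = \sum_j u_j v_j$. This $Q$ is non-degenerate: its Gram matrix is $\tfrac12\begin{pmatrix} 0 & I \\ I & 0\end{pmatrix}$, which has nonzero determinant. Hence $g_k$ is a $2k$-stabilisation of $g$, and by the first part it is equivalent to every other $2k$-stabilisation of $g$. Explicitly, the substitution $u_j = w_j + i w_{k+j}$, $v_j = w_j - i w_{k+j}$ turns $u_jv_j$ into $w_j^2 + w_{k+j}^2$.

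No step is a real obstacle. The only point needing care is that every change of variables used is linear and invertible, so that the induced map on formal power series is an automorphism fixing the maximal ideal.
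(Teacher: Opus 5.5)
Your proposal is correct and follows the same route as the paper, which justifies the lemma in one line by appealing to the fact that all non-degenerate quadratic forms over $\bC$ are equivalent. You have correctly supplied the details the paper leaves implicit: the diagonalisation, the induced linear automorphism of the formal power series ring preserving the maximal ideal, and the non-degeneracy of $uv$.
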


\begin{lemma} \label{lemmastabilisationofNND}
	If $g$ is Newton non-degenerate then the polynomial
	\begin{equation}
		\widetilde{g} : \bC \times \bC^N, \ (w,z) \to w^2 + g(z)
	\end{equation}
	is Newton non-degenerate.
\end{lemma}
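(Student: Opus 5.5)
The plan is to identify the compact faces of $\Gamma_+(\widetilde g)$ in terms of those of $\Gamma_+(g)$, and then check non-vanishing of the differential face by face.

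Write coordinates on $\bR\times\bR^N$ as $(t,x)$. The monomials of $\widetilde g$ are $w^2$, with exponent $(2,0)$, together with the monomials $(0,\nu)$ for $\nu\in S$. Hence
$$\Gamma_+(\widetilde g)=\mathrm{conv}\bigl(\{(2,0)+\bR_{\geq0}^{N+1}\}\cup(\{0\}\times\Gamma_+(g)+\bR_{\geq 0}e_0)\bigr).$$
Let $\Delta$ be a compact face, cut out by a linear $L(t,x)=at+\ell(x)$. Compactness forces $a>0$ and $\ell$ to have positive coefficients, so that $L$ is strictly positive on the recession cone. Set $m=\min_{\Gamma_+(g)}\ell$. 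There are three possibilities for the minimiser of $L$ on $\Gamma_+(\widetilde g)$:
\begin{itemize}
\item if $m<2a$, then $\Delta=\{0\}\times\Delta'$, where $\Delta'$ is the compact face of $\Gamma_+(g)$ cut out by $\ell$;
\item if $m>2a$, then $\Delta=\{(2,0)\}$;
\item if $m=2a$, then $\Delta$ is the convex hull (a join) of $(2,0)$ and $\{0\}\times\Delta'$.
\end{itemize}
One caveat: if $g(0)\neq0$ or $g$ has linear terms, faces may degenerate. In those cases $m$ could be $0$, and then only the first case occurs, with $\Delta'$ possibly the single point $0$. This causes no trouble, since the relevant restriction $g_{\Delta'}$ is then a nonzero constant, and the hypothesis already tells us its differential does not vanish on the torus, so this situation is included in the general argument.

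Now compute $\widetilde g_\Delta$ in each case and check that $d\widetilde g_\Delta\neq0$ on $(\bC^*)^{N+1}$:
\begin{itemize}
\item In the first case $\widetilde g_\Delta=g_{\Delta'}(z)$, and $d g_{\Delta'}\neq0$ on $(\bC^*)^N$ by non-degeneracy of $g$.
\item In the second case $\widetilde g_\Delta=w^2$, whose differential $2w\,dw$ is nonzero for $w\neq0$.
\item In the third case $\widetilde g_\Delta=w^2+g_{\Delta'}(z)$, and its differential has $dw$-component $2w\neq0$ on the torus.
\end{itemize}
So $\widetilde g$ is Newton non-degenerate. In the last case we do not even need the hypothesis on $g$.

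The only step needing care is the face identification. In particular we must justify that every compact face arises from an $L$ with positive coefficients, and that $\Delta'$ is then a compact face of $\Gamma_+(g)$. Both follow because a face of a Newton polyhedron is compact exactly when $L$ is strictly positive on $\bR^{N+1}_{\geq0}\setminus\{0\}$.
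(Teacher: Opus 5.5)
Your proof is correct and follows the same route as the paper: you identify each compact face of $\Gamma_+(\widetilde g)$ as either a compact face of $\Gamma_+(g)$ placed in $\{0\}\times\bR^N$ or its join with the vertex of $w^2$. Non-vanishing of $d\widetilde g_\Delta$ then comes from the hypothesis on $g$ or from the $2w\,dw$ term. Your version is slightly more careful than the paper's, since it uses the correct vertex $(2,0,\ldots,0)$ and also lists the face $\{(2,0,\ldots,0)\}$ on its own. Your side remark about $g(0)\neq 0$ is unnecessary: such a $g$ is never Newton non-degenerate because $dg_{\{0\}}=0$, so that case is vacuous. It does not affect the argument.
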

\begin{proof}
	Let us embed $\Gamma_+(g)$ into $\bR^N = 0 \times \bR^N \subset \bR^{N+1}$ which is the hyperplane given by setting the first coordinate to $0$.
	Then each compact face of $\Gamma_+(\widetilde{g})$ is either a compact face $\Delta$ of $\Gamma_+(g)$ or the convex hull $\widetilde{\Delta}$ of a compact face $\Delta$ of $\Gamma_+(g)$ and the point $(1,0,\cdots,0)$.
	We have $d\widetilde{g}_\Delta = dg_\Delta$ and $d\widetilde{g}_{\widetilde{\Delta}} = 2wdw + dg_\Delta$. In both cases they do not vanish at any point of $(\C^*)^{N+1}$.
\end{proof}

\begin{defn} \label{defnstablyequivalent}
	Two polynomials $f$ and $g$ are \emph{stably equivalent} if a stabilisation of $f$ is equivalent to a stabilisation of $g$.
\end{defn}

Arnol'd asked whether every polynomial with an isolated singularity at $0$ was stably equivalent to one which is Newton non-degenerate. We prove the existence of  counterexamples to that statement in the subsequent sections. By Lemma \ref{lemmastabilisationofNND}, it is sufficient to construct a polynomial $f$ for which none of the particular stabilisations $f_k$ (by $2k$ variables) are analyically equivalent to Newton non-degenerate singularities.

\section{Construction of the Singularity} \label{sectionConstruction}

We will fix the following notation throughout the rest of the paper. Let $\scrA_g$ denote the moduli space of principally polarised complex abelian varieties of dimension $g$. We typically view $\scrA_g$ as a complex orbifold\footnote{In the final section it will be important to view $\scrA_g$ as a scheme defined over $\bQ$ and we may write $\scrA_g(\bC)$ to emphasise when we are taking complex points.}. If $\delta = (d_1,\ldots,d_g) \in \bN^g$, we write $\scrA_{g,\delta}$ for the moduli space of abelian varieties of dimension $g$ with polarisation of type $\delta$; so $\delta = \underline{1} = (1,\ldots,1)$ corresponds to principal polarisations.  Note that the dual $A^{\vee}$ of a principally polarised abelian variety is again principally polarised, but an abelian variety isogenous to a principally polarised one need not admit a principal polarisation.

\begin{remark}\label{rmk:abvar-as-Hodge}
	We will make  use of the standard equivalence of categories \cite{deligne1971theorie2}  between complex abelian varieties up to isogeny and polarisable $\bQ$-Hodge structures of type $(-1,0) + (0,-1)$, via the map $A \mapsto H_1(A;\bQ) = H^1(A;\bQ)^{\vee}$; and of the analogous equivalence between (principally) polarised complex abelian varieties and (unimodular) polarised $\bZ$-Hodge structures of type $(-1,0) + (0,-1)$. As usual if $V$ is a  Hodge structure then $V(m)$ denotes its $m$-th Tate twist.
\end{remark}

\begin{defn} \label{defnweightedhomogenous}
	For any subfield $\bK \subset \bC$, we define $\scrS(\bK)$ to be the set of isomorphism classes of morphisms of varieties over $\bK$ (i.e. diagrams $p : X \to T$ where $p,V,T$ are all defined over $\bK$).
	We have a natural map
	\begin{equation} \label{eqndefiningoverbC}
		\scrS(\bK) \to \scrS(\bC), \quad [p : X \to T] \to [p_\bC : X_\bC \to T_\bC]
	\end{equation}
	sending each morphism to the corresponding one over $\bC$.

	For any smooth projective variety $V$ and element $[p : X \to T]$ of $\scrS(\bK)$, we say that $p$ is \emph{generically $H^1(V)$-constant} if there exists integers $e,i,m$ and a Zariski dense subset $U \subset T_\bC$
	so that the for every $\tau \in U$, there exists an isomorphism of Hodge structures
	\begin{equation}
		\Gr_e^W(H_c^i(p_\bC^{-1}(\tau);\bQ)(m)  \cong H^1(V;\bQ)
	\end{equation}
	(i.e. a graded piece of the mixed Hodge structure of a generic fiber agrees with $H^1(V;\bQ)$ up to a shift).
\end{defn}

\begin{lemma} \label{lemmaconditionsforA}
	For each $g \geq 16$, there is an abelian variety $A$ in $\scrA_g$ with the following property. Let $A'$ be an  abelian variety isogenous to $A^{\vee}$, with a polarisation of type $\delta$. Then
	\begin{enumerate}
		\item \label{itemNotInRational}
		      $A'$ is not contained in any rational subvariety of $\scrA_{g,\delta}$;
		\item \label{itemNoConstantSummandOfToric}
		      no element of $\scrS(\bQ)$ is generically $H^1(A')$-constant and
		\item \label{itemNScondition} the N\'eron-Severi  group of $A'$ has rank $1$.
	\end{enumerate}
\end{lemma}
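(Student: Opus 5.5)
Take $A$ to be a very general point of $\scrA_g$, i.e. a point outside a countable union of proper closed subvarieties of $\scrA_g(\bC)$, and check that each of the three conditions fails only on such a countable union. Then, since $\scrA_g(\bC)$ is not a countable union of proper closed subsets, a suitable $A$ exists. Note first that $A^\vee \cong A$ via the principal polarisation. Moreover, the abelian varieties $A'$ isogenous to $A$ together with a polarisation of type $\delta$ form a countable set of data: there are countably many isogenies (finite subgroups and quotients), countably many types $\delta$, and countably many polarisations.

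\textbf{Condition (3).} The Noether--Lefschetz / Hodge locus where $\mathrm{rk}\, NS \geq 2$ is a countable union of proper closed subvarieties of $\scrA_g$; these are the loci where an extra Hodge class of type $(1,1)$ appears. For very general $A$ we have $\mathrm{End}(A)=\bZ$ and $NS(A)=\bZ$. Since $NS\otimes\bQ$ is an isogeny invariant, the same holds for every $A'$.

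\textbf{Condition (2).} The set $\scrS(\bQ)$ is countable, because morphisms of varieties over $\bQ$ are given by finitely many polynomials with rational coefficients. Fix $[p:X\to T]$. If it is generically $H^1(V)$-constant for some $V$, then, with $(e,i,m)$ fixed, $V$ is determined up to isogeny by the weight-graded piece $\Gr^W_e H^i_c(p_\bC^{-1}(\tau))(m)$ for a single $\tau\in U$. This uses Remark~\ref{rmk:abvar-as-Hodge}: the Hodge structure $H^1(V;\bQ)$ determines $V$ up to isogeny. The countably many choices of $(e,i,m)$ give countably many isogeny classes, each of which contains only countably many isomorphism classes of principally polarised abelian varieties. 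Excluding these countably many points of $\scrA_g$ (and their isogeny classes) secures condition (2) for all $A'$.

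\textbf{Condition (1).} This is the main obstacle. The key input is the theorem of Tai, Freitag and Mumford that $\scrA_g$ is of general type for $g\geq 7$. A variety of general type is not uniruled, so a very general point does not lie on any rational curve. More precisely, the rational curves are parametrised by countably many Hom-scheme components, and if their union covered a dense set then $\scrA_g$ would be uniruled. Two further points must be handled.
\begin{itemize}
\item A rational subvariety through a point contains rational curves through it, so it suffices to treat curves.
\item We need the statement for $\scrA_{g,\delta}$ and for the point $A'$ rather than for $\scrA_g$ and $A$.
\end{itemize}
For the second point I would use the Hecke-type correspondences. For each isogeny type, consider the moduli space $\scrA_{g,\delta}^{\text{level}}$ of pairs (polarised variety, level/isogeny data). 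This gives a correspondence $\scrA_g \leftarrow Z \to \scrA_{g,\delta}$ with both maps finite. A rational curve through $A'$ in $\scrA_{g,\delta}$ lifts, after normalisation, to a curve in $Z$; this lift might have positive genus, so this step needs care. Instead I would argue via uniruledness: the union of rational curves in $\scrA_{g,\delta}$ is a countable union of closed subsets, and each is proper because $\scrA_{g,\delta}$ is also of general type for $g$ large. One can see this either from the Tai--Mumford argument, which works for any finite-index arithmetic subgroup and in particular for the paramodular groups, or because $\scrA_{g,\delta}$ finitely covers or is finitely covered compatibly with $\scrA_g$ via a common level cover, and general type passes to finite covers while non-uniruledness descends along dominant finite maps. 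The threshold $g\geq 16$ presumably ensures this uniformly in $\delta$. The preimage in $\scrA_g$ of each such proper closed set under the finite correspondence is again a countable union of proper closed sets. Excluding all of them over the countably many $\delta$ and isogenies completes the choice of $A$.
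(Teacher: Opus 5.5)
Your strategy is the paper's. You take $A$ very general and get (3) from the Noether--Lefschetz locus and isogeny invariance of $NS\otimes\bQ$. You get (2) from countability of $\scrS(\bQ)$ and isogeny invariance of $H^1(\,\cdot\,;\bQ)$. You get (1) by pulling the rational-curve locus of each $\scrA_{g,\delta}$ back to $\scrA_g$ through finite Hecke correspondences.

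\textbf{The second justification in (1) is false.} Non-uniruledness does not descend along finite dominant maps; it ascends. If $Y\to X$ is generically finite and dominant and $Y$ is uniruled, then so is $X$, but not conversely. For example, the modular curve $X(7)$ has genus $3$ yet maps finitely onto the $j$-line $\bP^1$. This is exactly the positive-genus-lift problem you noticed a few lines earlier. So general type of $\scrA_g$, or of a common level cover, says nothing about $\scrA_{g,\delta}$. The input has to be Tai's theorem that $\scrA_{g,\delta}$ itself is of general type for every $\delta$ once $g\geq 16$. That is what the paper cites, and it is why the bound is $16$ rather than $7$.

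\textbf{A missing remark in (1).} You should say why every pair $(A,A')$, with $A'$ carrying an \emph{arbitrary} polarisation of type $\delta$, lies on one of your finite correspondences. This needs the isogeny to be polarised, which holds because $NS(A)$ has rank $1$. The paper states this explicitly.

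\textbf{A caveat on (2), which applies equally to the paper's proof.} You say $V$ is determined up to isogeny by the fibre over a single $\tau\in U$. This tacitly requires $U$ in the definition of generically $H^1(V)$-constant to be Zariski \emph{open}, so that the sets attached to two different $V$ intersect. With $U$ merely Zariski dense the claim fails. The universal family over a fine level-$n$ moduli space of principally polarised abelian varieties is defined over $\bQ$. The isogeny locus of any $g$-dimensional $V$ is dense in its base, since $Sp_{2g}(\bQ)$-orbits are dense in Siegel space. So that family would be generically $H^1(V)$-constant for every $V$. Reading $U$ as open costs nothing, since the proof of Theorem~\ref{theoremfknotstablydegenerate} only uses constancy of the graded piece over all of $T$.
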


\begin{proof}
	According to \cite{mumford2006kodaira}, the moduli space of principally polarised abelian varieties has general type once $g\geq 7$, so the general point of $\scrA_g$ does not lie on any rational subvariety. The corresponding result for the spaces $\scrA_{g,\delta}$ with not necessarily principal polarisation holds for every $\delta$ once $g \geq 16$, see \cite{Tai}.

	Similarly, for any polarisation $\delta$ and any $g$, the general abelian variety has Neron-Severi group of rank 1. (This holds whenever $\mathrm{End}_{\bQ}(A) = \bQ$, and holds away from countably many subvarieties, cf. \cite[Chapters 5,9]{BL}. Note that having Picard rank 1 is preserved by isogenies, and hence by duality.)

	An isogeny $g: A' \to A$ is polarised if the polarisations $\lambda_A$ and $\lambda_{A'}$ satisfy $g^*\lambda_A = n\cdot \lambda_{A'}$ for some $n\in\bN$. The locus $\scrH_{g,\delta} \subset \scrA_g \times \scrA_{g,\delta}$ of pairs $(A,A')$ related by a polarised isogeny is a union of algebraic Hecke correspondences \cite[Lemma 4.2]{Orr} (to specify a particular one requires fixing $n$ and keeping track of the map underlying the isogeny). Each such irreducible correspondence is finite over both factors. Let $W_{g,\delta} \subset \scrA_{g,\delta}$ denote the (countable, proper) union of all positive-dimensional rational subvarieties. Let $W_g \subset \scrA_g$ denote the countable union of varieties which are Hecke correspondence images of points of $W_{g,\delta}$, for some $\delta$. Taking the diagonal correspondence, this in particular means $W_g$ contains the rational subvarieties of $\scrA_g$ itself.

	Since $\scrS(\bQ)$ is countable, there are only countably many abelian varieties $A''$ with the property that there exists some element of $\scrS(\bQ)$ that is generically $H^1(A'')$-constant.
	For any chosen polarisation $\delta$, let $Z_{g,\delta} \subset \scrA_{g,\delta}$ denote this countable subset. Let $Z_g \subset \scrA_g$ be the countable subset given by Hecke correspondence images of the various $Z_{g,\delta}$.

	If $A$ has Picard rank one and $A'$ is isogenous to $A$, then $A'$ has Picard rank one and the isogeny is necessarily polarised. We now pick any abelian variety $A$ of Picard rank one  which does not belong to $Z_g \cup W_g$.  This excludes a countable family of proper subvarieties. Any such $A$ then satisfies the required property, i.e. all three constraints hold on all members of its isogeny class.
\end{proof}

\begin{lemma}
	Every complex variety is birational to a hypersurface in projective space.
\end{lemma}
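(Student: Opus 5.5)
Since birationality only concerns a dense open subset, we may first replace the given variety $X$ by an irreducible affine open subset; we tacitly assume, as is implicit in the statement, that $X$ is irreducible of dimension $n$. The plan is the classical primitive element argument. By Noether normalisation the function field $K=\bC(X)$ is a finite extension of a purely transcendental field $\bC(x_1,\dots,x_n)$, where $x_1,\dots,x_n$ are algebraically independent regular functions on the affine open. Since we are in characteristic zero, this finite extension is separable. The primitive element theorem then gives $y \in K$ with $K=\bC(x_1,\dots,x_n)(y)$. After multiplying $y$ by a suitable nonzero polynomial in the $x_i$, we may take $y$ integral over $\bC[x_1,\dots,x_n]$.

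Let $P(x_1,\dots,x_n,Y)$ be the minimal polynomial of $y$ over $\bC(x_1,\dots,x_n)$. Clearing denominators, we regard $P$ as an irreducible polynomial in $\bC[x_1,\dots,x_n,Y]$. Consider the affine hypersurface $V=\{P=0\}\subset \bA^{n+1}$. Because $P$ is irreducible, $V$ is irreducible, and its function field is
\[
\operatorname{Frac}\bigl(\bC[x,Y]/(P)\bigr)\cong \bC(x_1,\dots,x_n)[Y]/(P)\cong K.
\]
Two irreducible varieties with isomorphic function fields (as $\bC$-algebras) are birational. Concretely, the map $X \dashrightarrow V$, $p\mapsto (x_1(p),\dots,x_n(p),y(p))$, is a birational map. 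Finally, we take the projective closure $\overline{V}\subset \bP^{n+1}$, defined by the homogenisation of $P$. This is a hypersurface in projective space containing $V$ as a dense open subset, so $X$ is birational to $\overline{V}$.

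I do not expect a serious obstacle here. The only points requiring care are the following. First, one must arrange that the extension is finite and separable, which is where Noether normalisation and characteristic zero are used. Second, one must check that the minimal polynomial, after clearing denominators, stays irreducible in $\bC[x,Y]$; this follows from Gauss's lemma once content in $\bC[x]$ is removed. Third, reducible varieties are excluded, or else one should interpret the statement componentwise. In the paper's application, the variety is an abelian variety $A$, which is irreducible, so this yields a hypersurface $\{F=0\}\subset\bP^{g+1}$ birational to $A$, as required in the construction of $f=F+h$.
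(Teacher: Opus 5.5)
Your proposal is correct and is essentially the argument behind the paper's one-line proof, which simply cites Hartshorne I.4.9. That argument is the same primitive-element construction: a separating transcendence basis (which you obtain via Noether normalisation), a primitive element $y$, its minimal polynomial made irreducible in $\bC[x_1,\dots,x_n,Y]$, and the projective closure. Your dimension count $\{F=0\}\subset\bP^{g+1}$ is also the right one for a hypersurface birational to the $g$-dimensional abelian variety $A$.
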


\begin{proof} This holds over any algebraically closed field; see \cite[I.4.9]{Hartshorne}. \end{proof}

We let $H \subset \bP^{N-1}$ be a hypersurface birational to $A$ where $N-1=g$ and let $F : \bC^N \to \C$ be its defining homogeneous polynomial of degree $d$.

\begin{lemma}\label{lem:add to make isolated}
	If $F: \bC^N \to \bC$ is homogeneous of degree $d$, then one can add a polynomial $h$ of degree $>d$ to obtain a polynomial $f = F + h$ which has an isolated singularity at $0$.
\end{lemma}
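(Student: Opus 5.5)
The plan is a parametric transversality (Bertini–Sard type) argument. Fix an integer $M > d$. Let $P$ be the finite-dimensional complex vector space of homogeneous polynomials of degree $M$ in $z_1,\dots,z_N$. For $h \in P$ put $f_h = F + h$. I will show that for a generic $h \in P$ the only critical point of $f_h$ near $0$ is $0$ itself. Here $0$ is automatically a critical point, since $d \geq 2$: $H$ is a hypersurface birational to an abelian variety of dimension $g\ge 16$, so it is not a hyperplane.

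\emph{Step 1 (incidence variety).} Consider the algebraic map
\begin{equation}
\Phi : (\bC^N\setminus\{0\}) \times P \to \bC^N, \qquad \Phi(z,h) = \nabla F(z) + \nabla h(z),
\end{equation}
and the incidence variety $\Sigma = \Phi^{-1}(0)$. I will show that $\Phi$ is a submersion. Since $\nabla F$ does not depend on $h$, it suffices that for each fixed $z \neq 0$ the linear map $P \to \bC^N$, $h \mapsto \nabla h(z)$, is surjective.

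Choose a linear form $\ell$ with $\ell(z) \neq 0$, and take $h_k = z_k \ell^{M-1}$. Then
\begin{equation}
\nabla h_k(z) = \ell(z)^{M-1} e_k + (M-1) z_k \ell(z)^{M-2}\, \nabla \ell.
\end{equation}
Together with $\nabla(\ell^M)(z) = M\ell(z)^{M-1}\nabla\ell$, these vectors span $\bC^N$. Indeed, the span contains $\nabla\ell$, hence each $\ell(z)^{M-1}e_k$, and $\ell(z)\neq 0$.

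This surjectivity check is the one step needing an actual computation. I expect it to be the only real obstacle, and the explicit choice $h_k = z_k\ell^{M-1}$ handles it.

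\emph{Step 2 (dimension count).} Since $\Phi$ is a submersion, $\Sigma$ is smooth of dimension $\dim P$. Apply generic smoothness (algebraic Sard) to the projection $\pi : \Sigma \to P$. For $h$ in a nonempty Zariski open subset of $P$, the fibre $\pi^{-1}(h)$ is either empty or smooth of dimension $\dim \Sigma - \dim P = 0$. Now $\pi^{-1}(h)$ is exactly the critical locus of $f_h$ in $\bC^N\setminus\{0\}$. So for generic $h$ this critical locus is a $0$-dimensional algebraic set, hence finite.

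\emph{Step 3 (conclusion).} Pick such an $h$. The critical set of $f = F + h$ is $\{0\}$ together with finitely many other points. Hence $0$ is an isolated critical point, i.e.\ an isolated singularity of $f$ at $0$. Moreover, every monomial of $h$ has degree $M > d$, as the statement requires.

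If one wants the Fermat-type $h$ mentioned in the introduction, one can afterwards appeal to finite determinacy. Alternatively, one can enlarge $P$ to contain $\sum_i z_i^M$ and note that genericity is an open dense condition. Neither refinement is needed for the statement as posed.
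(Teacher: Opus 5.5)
Your argument is correct, but it takes a different route from the paper. The paper makes the explicit choice $h=\sum_j z_j^D$ for any $D>d$. The top-degree parts $Dz_j^{D-1}$ of the partials $\partial_j f=\partial_j F+Dz_j^{D-1}$ have no common zero in $\bP^{N-1}$. Hence the projective closure of the critical locus misses the hyperplane at infinity, so the critical locus is finite. Equivalently, the Jacobian algebra is finite-dimensional, with total Milnor number $(D-1)^N$.

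You instead take a generic $h$ in the space $P$ of degree-$M$ forms. Your computation with $z_k\ell^{M-1}$ and $\ell^M$ correctly shows that $h\mapsto\nabla h(z)$ is surjective for $z\neq 0$. So the incidence variety is smooth of dimension $\dim P$, and generic smoothness gives finitely many critical points off the origin for generic $h$.

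Your route says more about the generic situation: a dense Zariski-open set of $h$ works, and the extra critical points are even nondegenerate. The cost is invoking generic smoothness and getting no explicit $h$. The paper's route is shorter and elementary, and it bounds the global Milnor number. It also produces a concrete $h$ defined over $\bQ$, which is convenient for the algorithmic discussion in Section~\ref{sec:algorithm}.

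One caveat: your closing aside does not actually justify the Fermat choice. $\sum_i z_i^M$ already lies in $P$, and a dense Zariski-open set need not contain that particular point. Finite determinacy only lets you add sufficiently high-order terms to a singularity already known to be isolated, so it cannot replace your generic $h$ by a Fermat one. The Fermat case is exactly what the paper's ``no critical points at infinity'' argument proves. As you say, though, it is not needed for the statement as posed.
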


\begin{proof}
	One can check that for any $D > d$, taking $h = \sum_{j=1}^N z_j^{D}$ suffices.  (The common zero set of the partial derivatives of $f=F+h$ has no critical points at infinity, so is affine of dimension zero and hence finite, so $0$ is an isolated point; alternatively, the Jacobi algebra of $f$ is clearly finite-dimensional.)
\end{proof}

Now pick $f : \bC^N \to \bC$  as in Lemma \ref{lem:add to make isolated}, so that $f$ has an isolated singularity at $0$, i.e. $f = F + h$ where $F$ defines a hypersurface birational to an $A$ as in Lemma \ref{lemmaconditionsforA}. This $f$ is the singularity which we will prove is stably degenerate.

\section{Contact Loci} \label{defnofcontactloci}

We recall the definition of the \emph{contact locus} of a singularity, cf. \cite{ELM}. We fix throughout an integer $d \in \bN$ which we then suppress from the notation.

\begin{defn}  \label{defnmodifiedcontactlocus}
	For any (algebraic or analytic) variety $V$, define $\scrX(V)$ to be the space of $d$-jets of morphisms $\bC \to V$.
	We define the \emph{evaluation map} $\ev : \scrX(V) \to V$ to be the map sending a jet $u(t)$ to $u(0)$.

	For any polynomial $g : \C^m \to \C$, we define the \emph{$d$-contact locus of $g$} to be the subspace $\chi_g \subset \scrX(\bC^m)$ of jets $u(t)$ satisfying $u(0) = 0$ and $u(t) = a t^d \ \Mod \ t^{d+1}$ for some $a \in \bC^*$.
	We define
	$\chi'_g \subset \scrX(g^{-1}(0))$ to be the subspace of jets
	$u(t)$ satisfying $\ev(u(t))=0$.
	We also define $\chi''_g := \chi_g \cup \chi'_g \subset \scrX(\bC^m)$.
\end{defn}

The contact locus behaves well under formal changes of co-ordinates.

\begin{lemma} \label{lemmaChangeOfCoordinatrd}
	Let $h_0,h_1$ be two polynomials which are equivalent in the sense of \ref{defnequivalence}.
	Then $\chi'_{h_0}$ is isomorphic to $\chi'_{h_1}$.
\end{lemma}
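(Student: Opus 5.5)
The plan is to make the formal automorphism act directly on $d$-jets. A $d$-jet $u(t)$ with $u(0)=0$ involves only its coefficients up to order $d$, and composing with a formal automorphism is determined modulo a finite power of the maximal ideal. So we reduce to an automorphism of a finite-dimensional truncation and check that it carries $\chi'_{h_0}$ onto $\chi'_{h_1}$.

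\textbf{Step 1: truncate.} Let $\mathfrak m=(z_1,\dots,z_m)\subset R=\bC[[z_1,\dots,z_m]]$. Let $\Phi$ be an automorphism of $R$ with $\Phi(\mathfrak m)=\mathfrak m$ and $\Phi(\widehat{h_0})=\widehat{h_1}$. Write $\phi_j:=\Phi(z_j)\in\mathfrak m$, and let $\phi_j^{\le d}$ be its polynomial truncation modulo $\mathfrak m^{d+1}$. Define the polynomial map $\phi^{\le d}=(\phi^{\le d}_1,\dots,\phi^{\le d}_m):\bC^m\to\bC^m$.

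For a jet $u(t)\in\scrX(\bC^m)$ with $u(0)=0$, set $\Psi(u):=\phi^{\le d}(u(t)) \bmod t^{d+1}$. This is well defined: every component of $u$ lies in $t\bC[t]$, so any element of $\mathfrak m^{d+1}$ evaluated at $u$ vanishes modulo $t^{d+1}$. For the same reason, for every $G\in R$ the expression $G(u(t))\bmod t^{d+1}$ depends only on $G \bmod \mathfrak m^{d+1}$. Therefore
\[
(G\circ\Phi)(u) \equiv G(\Psi(u)) \pmod{t^{d+1}},
\]
where $G\circ\Phi$ denotes $\Phi(G)$.

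\textbf{Step 2: $\Psi$ is an isomorphism.} $\Psi$ is a polynomial map on the affine space of jets vanishing at $0$. Let $\Phi^{-1}$ be the inverse automorphism, which also preserves $\mathfrak m$, and build $\Psi'$ from it in the same way. The identity $\Phi^{-1}\circ\Phi=\mathrm{id}$ modulo $\mathfrak m^{d+1}$, combined with the compatibility from Step 1, gives $\Psi'\circ\Psi=\mathrm{id}$ and $\Psi\circ\Psi'=\mathrm{id}$.

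\textbf{Step 3: compare the contact loci.} The locus $\chi'_{h}$ is the set of jets $u$ with $u(0)=0$ and $h(u(t))\equiv 0 \bmod t^{d+1}$; it sits inside $\scrX(h^{-1}(0))$. Apply Step 1 with $G=\widehat{h_0}$, and use the direction convention in which $\Phi$ sends $\widehat{h_0}$ to $\widehat{h_1}$, i.e. $\widehat{h_1}=\widehat{h_0}\circ\Phi$. This gives
\[
h_1(u)\equiv h_0(\Psi(u)) \pmod{t^{d+1}}.
\]
Hence $u\in\chi'_{h_1}$ if and only if $\Psi(u)\in\chi'_{h_0}$. Both conditions are defined by the same polynomial equations in the jet coefficients, so the isomorphism holds scheme-theoretically and not just on points. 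Therefore $\Psi$ restricts to an isomorphism $\chi'_{h_1}\cong\chi'_{h_0}$.

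\textbf{Main obstacle.} The only delicate point is bookkeeping: fixing which direction the automorphism acts (pullback versus substitution), and justifying the truncation, i.e. that everything depends only on $\Phi \bmod \mathfrak m^{d+1}$. That truncation is what lets a formal, non-polynomial automorphism induce an algebraic isomorphism of varieties.
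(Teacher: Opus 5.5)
Your proposal is correct and follows the paper's proof, which says only that the isomorphism comes from postcomposing jets with the change of coordinates. You supply the details the paper leaves implicit: only $\Phi \bmod \mathfrak m^{d+1}$ matters, so $\Psi$ is a polynomial automorphism of jet space whose inverse comes from $\Phi^{-1}$, and the identity $\widehat{h_1}(u)\equiv\widehat{h_0}(\Psi(u)) \pmod{t^{d+1}}$ carries the defining ideal of one contact locus onto the other. That same identity also identifies $\chi_{h_1}$ with $\chi_{h_0}$, which is the form in which the lemma is actually used in the proof of the main theorem.
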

\begin{proof}
	This isomorphism is given by postcomposing each jet in $\chi'_{h_0}$ by the corresponding change of coordinates.
\end{proof}

\begin{defn}
	Let $(f_\tau)_{\tau \in T}$ be a family of polynomials $\bC^N \to \bC$ parameterized by a variety $T$. We define $\chi_{f_T} \subset \scrX(\bC^N) \times T$ to be the subvariety $\{ (u,\tau) \in \scrX(\bC^N) \times T \, | \, u \in \chi_{f_\tau} \}$.
	We define
	\begin{equation} \label{eqnprojectionmap}
		\pi_{f_T} : \chi_{f_T} \to T, \quad \pi_{f_T}(u,\tau) = \tau.
	\end{equation}
\end{defn}

\section{Cohomological Triviality of a Jet Space Fibration}

Let $p:\bC^N \to \bC$ be a polynomial with Newton polyhedron $\Gamma_+(p)$.

\begin{lemma}
	A regular unimodular subdivision of the dual fan $\Sigma_{\Gamma}$ of $\Gamma_+(p)$ defines a toric resolution $Y \to \bC^N$. If $p$ is Newton non-degenerate, this is a log resolution $(Y,D) \to (\bC^N,p^{-1}(0))$.
\end{lemma}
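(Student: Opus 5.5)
The plan is to use standard toric geometry. A regular subdivision $\Sigma$ of $\Sigma_\Gamma$ is still supported on the positive orthant $\bR_{\geq 0}^N$. Refining to a unimodular (smooth) fan only adds rays and cones inside that support, so the induced toric morphism $\pi : Y = Y_\Sigma \to \bC^N$ is proper and birational. Since $Y$ is smooth, $\pi$ is a resolution; it is an isomorphism over $(\bC^*)^N$, and in fact over the complement of the coordinate hyperplanes through $0$. I would first record these facts. Properness holds because $|\Sigma| = \bR^N_{\geq 0}$, and birationality because the open torus orbit is unchanged.

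For the log resolution statement, let $D$ be $\pi^{-1}(p^{-1}(0))$, taken with reduced structure. This is the union of the strict transform $\widetilde{Z}$ of $Z = p^{-1}(0)$ and the exceptional/toric boundary divisors $D_\rho$ with $\rho$ a ray of $\Sigma$ lying over $0$ (that is, rays in the interior of the orthant, or more generally rays whose orbit closures map into $Z$). To show $D$ has simple normal crossings, I would work in an affine chart $U_\sigma \cong \bC^N$ for a maximal cone $\sigma$ with primitive generators $v_1,\dots,v_N$ (unimodularity is what gives this $\bC^N$ chart). In coordinates $y_1,\dots,y_N$ one has $z^\nu = \prod_i y_i^{\langle v_i,\nu\rangle}$. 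Hence
\[
\pi^*p = \prod_i y_i^{m_i}\, \widetilde{p}(y),
\]
where $m_i = \min_{\nu \in \Gamma_+(p)} \langle v_i,\nu\rangle$. Here $\widetilde{p}$ is the strict transform, and $\widetilde{p}$ restricted to $\{y_i = 0, i\in I\}$ equals the pulled back truncation $p_{\Delta}$ for the face $\Delta$ cut out by the cone spanned by $\{v_i\}_{i\in I}$. This uses that $\Sigma$ refines the dual fan, so this cone lies in a single cone of $\Sigma_\Gamma$ and determines a face.

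The key step is transversality. On the torus orbit $O_\tau$ with $\tau = \mathrm{cone}(v_i : i \in I)$, which is $\{y_i = 0, i \in I;\ y_j \neq 0, j\notin I\}$, the function $\widetilde{p}$ restricted to the closed stratum equals, up to a monomial unit, $p_\Delta$ written in torus coordinates. This function is quasi-homogeneous in the directions of $\tau$, so by the Euler relation we may reduce to the torus $(\bC^*)^{N-|I|}$. Newton non-degeneracy for compact faces then says $dp_\Delta \neq 0$ on $(\bC^*)^N$, and this forces $\widetilde{Z}\cap \overline{O_\tau}$ to be smooth of the right codimension. Equivalently, $\widetilde{Z}$ is transverse to every stratum $\{y_i=0, i\in I\}$. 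Together with the coordinate hyperplanes $y_i = 0$, this is exactly the SNC condition.

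The main obstacle I anticipate is the bookkeeping at strata lying over points of $Z\setminus\{0\}$ rather than over $0$. There the relevant face $\Delta$ is non-compact, and the non-degeneracy hypothesis as stated only covers compact faces. I would handle this by restricting the claim to a neighbourhood of $\pi^{-1}(0)$, as in the standard statement of Varchenko/Khovanskii, which is all that is needed for contact loci at $0$. Alternatively, I would note that away from $0$ one can shrink to where the statement is local and $p$ has an isolated singularity, so $Z\setminus 0$ is smooth. I would also invoke the preceding remark (the strict transform meets every orbit over $0$ transversely), which is precisely this computation.
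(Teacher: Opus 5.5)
Your proof is correct, and it is the standard toric-chart argument that the paper does not write out (its proof is only ``Standard, see e.g.\ \cite{CLS}''). You assert the log-resolution property only near $\pi^{-1}(0)$, which follows from your transversality computation along the orbits over $0$ together with openness of transversality; this is the right reading rather than a weakness, because the global claim can fail (e.g.\ $p=z_2^2+z_1^2(1-z_1)^3$ is Newton non-degenerate, any such $Y\to\bC^2$ is an isomorphism away from the origin, yet $p^{-1}(0)$ has a cusp at $(1,0)$), and the germ version is all that Lemma~\ref{lemmaTopTrivual} uses.
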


\begin{proof}
	Standard, see e.g. \cite{CLS}.
\end{proof}

\begin{cor}\label{cor:simultaneous}
	An algebraic family $(f_\tau)_{\tau \in T}$ of Newton non-degenerate polynomials $\C^N \to \C$ with the same Newton polyhedron admits a simultaneous log resolution $Y \times T \to \bC^N \times T$.
\end{cor}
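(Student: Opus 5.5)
The plan is to take the toric modification $\pi : Y \to \bC^N$ from the preceding lemma, with $Y$ built from one fixed regular unimodular subdivision $\Sigma$ of the dual fan $\Sigma_\Gamma$. The essential point is that $Y$ depends only on the common Newton polyhedron $\Gamma = \Gamma_+(f_\tau)$ and not on $\tau$. So we take $Y \times T \to \bC^N \times T$, given by $\pi \times \mathrm{id}_T$, as the candidate simultaneous resolution. This map is proper and birational, and it is an isomorphism over the open torus, so the work is to show that on each fibre it is a log resolution of $(\bC^N, f_\tau^{-1}(0))$, in a way that is uniform in $\tau$.

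First, for each fixed $\tau$ the preceding lemma applies directly. Since $f_\tau$ is Newton non-degenerate with Newton polyhedron $\Gamma$, and $\Sigma$ subdivides $\Sigma_\Gamma$, the map $(Y,D_\tau) \to (\bC^N, f_\tau^{-1}(0))$ is a log resolution. Here $D_\tau$ is the union of the exceptional toric divisors $D_\rho$ (one for each new ray $\rho$ of $\Sigma$, plus the coordinate divisors along which $f_\tau$ vanishes) and the strict transform $\widetilde{H}_\tau$ of $\{f_\tau = 0\}$. The toric divisors $D_\rho$ and their combinatorics are the same for every $\tau$, since they come from $\Sigma$ alone. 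Moreover, the order of vanishing of $\pi^* f_\tau$ along $D_\rho$ is $\min_{\nu \in \Gamma} \langle \rho, \nu \rangle$, so it depends only on $\Gamma$ and is therefore constant in $\tau$.

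Next we globalise over $T$. On a toric chart $\bC^N_\sigma$ of $Y$ for a top-dimensional cone $\sigma$, we can write $\pi^* f_\tau = y^{m_\sigma} \cdot \widetilde{f}_{\tau,\sigma}(y)$. Here $m_\sigma$ is independent of $\tau$, and $\widetilde{f}_{\tau,\sigma}$ depends algebraically on $\tau$, because the coefficients $a_\nu(\tau)$ are regular functions on $T$. This defines a strict transform $\widetilde{H}_T \subset Y \times T$ together with the divisor $D_T = \bigcup_\rho (D_\rho \times T) \cup \widetilde{H}_T$.

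To see that $D_T$ is simple normal crossings relative to $T$, note that non-degeneracy on a face $\Delta$ is equivalent to transversality of $\widetilde{H}_\tau$ to the torus orbit $O_\Delta$. Concretely, on $O_\Delta$ the function $\widetilde{f}_{\tau,\sigma}$ restricts to $f_{\tau,\Delta}$ up to a monomial factor, and the condition is that $d f_{\tau,\Delta}$ does not vanish on $(\bC^*)^N$. We apply this pointwise at every $\tau$. This shows that $\widetilde{H}_T$ is smooth over $T$ near the exceptional locus, and that all intersections of $\widetilde{H}_T$ with the strata $D_I \times T$ are smooth over $T$ and have the expected codimension.

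The main obstacle is the behaviour away from the exceptional fibre, and making sure $\widetilde{H}_T$ is genuinely flat and fibrewise smooth there rather than only at each individual $\tau$. Near the exceptional locus this is handled by working locally, and the Jacobian criterion applied fibrewise suffices. In the statement, however, ``log resolution'' should be understood germ-wise near $0$, as in the rest of the paper, since a global hypersurface $f_\tau^{-1}(0)$ can be singular far from the origin. So I would shrink to a neighbourhood of $\pi^{-1}(0) \times T$, or restrict to the region relevant to the contact locus, and argue the following. The non-vanishing of $d f_{\tau,\Delta}$ on the torus is an open condition. Applied on each compact face, and combined with properness of $\pi$ over $0$, it gives relative smoothness of $\widetilde{H}_T$ and transversality to the strata in a neighbourhood that is uniform over $T$, possibly after replacing $T$ by a Zariski open dense subset. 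That suffices for the later application in Lemma \ref{lemmaTopTrivual}.
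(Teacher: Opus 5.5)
Your proposal is correct and follows the paper's own (implicit) argument: $Y$ is determined by a regular unimodular subdivision of the common dual fan alone, so $Y\times T\to\bC^N\times T$ restricts on each fibre to the log resolution of the preceding lemma. Your caveat that this must be read near $\pi^{-1}(0)$, since only compact faces are controlled, is the right reading of how it is used in Lemma \ref{lemmaTopTrivual}. One correction: do not shrink $T$ to a Zariski open subset, because the main theorem needs $\tau_0\in T$. The shrinking is also unnecessary, since the transversality conditions are open in $(y,\tau)$ and hold along all of $\pi^{-1}(0)\times T$, so they already hold on an open neighbourhood of $\pi^{-1}(0)\times T$.
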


\begin{defn} \label{defntorusandcontact}
	We define $\bT := (\bC^*)^N$. We define $\scrX(\bT)$ to be the group where multiplication is induced by multiplication on $\bT$:
	\begin{equation}
		\scrX(\bT) \times \scrX(\bT) = \scrX(\bT \times \bT) \to \scrX(\bT).
	\end{equation}
	For each $\nu=(\nu_1,\cdots,\nu_N) \in \bN^N$ let $\scrX_\nu \subset \scrX(\bC^N)$ be the subspace of jets that vanish to order exactly $\nu_i$ along the $i$th coordinate hyperplane for each $i=1,\cdots,N$.
\end{defn}

\begin{lemma} \label{lemmaorbitsofscrXbT}
	Let $\bT$ act on $\bC^N$ in the usual way via diagonal matrices.
	Then for each orbit $O$ of the induced action of $\scrX(\bT)$ on $\scrX(\bC^N)$, there exists $\nu=(\nu_1,\cdots,\nu_N) \in \bN^N$ so that $O = \scrX_\nu$.
	Additionally, $\ev(O) = \{0\}$ if and only if $\nu \in \bN^N_{>0}$ under this correspondence.
\end{lemma}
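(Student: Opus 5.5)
The approach is to write jets in coordinates and reduce everything to a statement about single $d$-jets in one variable. A $d$-jet $u(t) \in \scrX(\bC^N)$ is a tuple $(u_1(t),\dots,u_N(t))$ of polynomials of degree at most $d$, taken modulo $t^{d+1}$. An element of $\scrX(\bT)$ is a tuple $(\lambda_1(t),\dots,\lambda_N(t))$ of $d$-jets with $\lambda_i(0)\neq 0$; these are exactly the units of $\bC[t]/(t^{d+1})$. The action is coordinatewise multiplication, so the orbit structure splits as a product, and it suffices to analyse the action of the unit group $(\bC[t]/t^{d+1})^\times$ on $\bC[t]/t^{d+1}$ in one coordinate.

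For a single coordinate, first set the convention that the vanishing order $\nu_i$ of a jet lies in $\{0,1,\dots,d\}\cup\{\infty\}$, where $\infty$ (or $d+1$) is assigned to the zero jet. Then show that two jets $a(t), b(t)$ lie in the same unit-orbit if and only if they have the same vanishing order.

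\emph{Invariance.} Multiplying by a unit does not change the order of vanishing.

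\emph{Transitivity.} If $a = t^k \alpha(t)$ and $b = t^k\beta(t)$ with $\alpha(0),\beta(0)\neq 0$ and $k\le d$, then $\beta\alpha^{-1}$ is a unit. We have $(\beta\alpha^{-1})\,a = t^k\beta = b$ modulo $t^{d+1}$, so the two jets are in the same orbit. Here the truncation causes no problem, because only the class of $\alpha$ modulo $t^{d+1-k}$ matters and it is still invertible. The zero jet forms its own orbit.

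Taking products over the coordinates, each orbit of $\scrX(\bT)$ is precisely the set $\scrX_\nu$ of jets with prescribed vanishing orders $\nu=(\nu_1,\dots,\nu_N)$ along the coordinate hyperplanes.

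For the second claim, note that $\ev(u)=u(0)$, and $u_i(0)=0$ exactly when $\nu_i>0$. So $\ev(\scrX_\nu)=\{0\}$ if and only if every $\nu_i>0$. In the other direction, if some $\nu_i=0$, then every point of the orbit has nonzero $i$-th coordinate at $t=0$, so the image is not $\{0\}$.

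The only real obstacle is bookkeeping. The index set has to include the "vanishes identically" value, which means reading $\bN^N$ with $\nu_i\in\{0,\dots,d,\infty\}$ or $\nu_i \in \{0,\dots,d+1\}$. Under that convention, the condition $\nu\in\bN^N_{>0}$ in the statement correctly includes the zero jet among the orbits with $\ev(O)=\{0\}$.
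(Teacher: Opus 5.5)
Your proof is correct and follows the same route as the paper: reduce to $N=1$ via the product splitting, then note that the unit group of $\bC[t]/(t^{d+1})$ acts transitively on jets of a fixed vanishing order, since each such jet is $t^{\nu}$ times a unit, and read off that $\ev(\scrX_\nu)=\{0\}$ exactly when all $\nu_i>0$. Your explicit convention giving the zero jet order $d+1$ (or $\infty$) settles a bookkeeping point that the paper's proof leaves implicit.
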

\begin{proof}
	It is sufficient to prove this in the case $N=1$ since the group action and $\scrX_\nu$ splits up as a product.
	In this case, the group action is the map:
	\begin{equation}
		\scrX(\bT) \times \scrX(\bC) \to \scrX(\bC), \quad (v(t),u(t)) \to v(t)u(t).
	\end{equation}
	The orbits of this action are $\scrX_\nu$, $\nu \in \bN$ since every element of $\scrX_\nu$ is of the form $t^\nu w(t)$ where $w(t) \in \scrX(\bC^*)$.
	Also such orbits map to $0 \in \bC$ under $\ev$ if and only if $\nu \neq 0$.
\end{proof}

Let $E = \{\prod_{i=1}^N z_i = 0\} \subset \bC^N$ where $z_1,\cdots,z_N$ are the standard coordinates of $\bC^N$. An analytic submanifold of $\bC^N$ is said to be transverse to $E$ if it is transverse to every smooth torus orbit in $E$.

\begin{lemma} \label{lemmaarcspaceintersection}
	Suppose $(g_\tau)_{\tau \in T}$ is a family of analytic functions defined on some open subset $U$ of $\bC^N$ (in the Euclidean topology)
	so that $0$ is a regular value of $g_\tau$ and $g_\tau^{-1}(0)$ is smooth and transverse to $E$ for each $\tau \in T$.
	Let $O$ be an orbit of the $\scrX(\bT)$-action on $\scrX(\bC^N)$ so that $\ev(O) \subset E$.
	Let $k_1,\cdots,k_N \in \bN$.
	Define $h_\tau := g_\tau \prod_{j=1}^N z_i^{k_i}$ for each $\tau \in T$.
	Then the map
	\begin{equation} \label{eqnprojectiontoT}
		\{ (v(t),\tau) \in \scrX(U) \times T \ : \ h_\tau(v(t)) = a t^d \ \Mod \ t^{d+1}, \ a \in \bC^* \} \cap (O \times T) \to (\ev(O) \cap U) \times T
	\end{equation}
	sending $(v(t),\tau)$ to $(\ev(v(t)),\tau)$ is a topologically locally trivial fibration over
	\begin{equation} \label{eqnfibrationregion}
		\{(y,\tau) \in (\ev(O) \cap U) \times T. \ : \ g_\tau(y)=0\}
	\end{equation}
	or its complement in $(\ev(O) \cap U) \times T$.
\end{lemma}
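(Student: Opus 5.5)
We will make the fibre over each point completely explicit in orbit coordinates and exhibit local trivialisations. Write $O = \scrX_\nu$ using Lemma \ref{lemmaorbitsofscrXbT}, and let $I = \{i : \nu_i > 0\}$. Then $\ev(O)$ is the torus orbit $\{z_i = 0 \text{ for } i\in I,\ z_j \neq 0 \text{ for } j \notin I\}$. A jet $v(t) \in O$ can be written uniquely as $v_i(t) = t^{\nu_i} w_i(t)$ with $w_i(t)$ a jet in $\bC^*$, truncated appropriately. The relevant coefficients of $w_i$ are those of degree $\le d - \nu_i$, or all degree $\le d$ coefficients with $w_i(0)\neq 0$ when $\nu_i = 0$. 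Put $K := \sum_i k_i \nu_i$. Then
\[
h_\tau(v(t)) = t^{K} \, g_\tau(v(t)) \prod_i w_i(t)^{k_i},
\]
and $\prod_i w_i^{k_i}$ is a unit jet. So the condition $h_\tau(v) = a t^d \bmod t^{d+1}$, $a\neq 0$, says exactly that $g_\tau(v(t))\prod_i w_i^{k_i}$ vanishes to order exactly $d-K$. If $K>d$ the set is empty and there is nothing to prove; assume $K \le d$ and set $e := d-K$.

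\textbf{Complement of the zero locus.} Here $g_\tau(y)\neq 0$ with $y = \ev(v)$. The order of vanishing of $g_\tau(v(t))$ is then $0$. So if $e>0$ the set is empty over this region, and if $e = 0$ the condition is automatic. In the latter case the preimage of $(y,\tau)$ is the full fibre of $\ev : O \to \ev(O)$ over $y$, restricted to jets landing in $U$. This is a product: the $y$-coordinates are fixed, and the remaining coefficients form an affine space times some $\bC^*$ factors. It is therefore trivially locally trivial.

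\textbf{Zero locus.} Here $g_\tau(y)=0$, and the key input is transversality. Since $g_\tau^{-1}(0)$ is smooth and transverse to the stratum $\ev(O)$ at $y$, the restriction of $dg_\tau$ to the tangent space of the stratum $\{z_i=0,\ i\in I\}$ is nonzero at $y$. Hence some coordinate $z_j$ with $j\notin I$ has $\partial g_\tau/\partial z_j(y)\neq 0$. Locally near $(y,\tau)$ we fix such a $j$ and apply the implicit function theorem in families, uniformly in the parameter. This lets us solve the equation
\[
g_\tau(v(t))\prod_i w_i^{k_i} = c\, t^{e} \bmod t^{d+1}, \qquad c \in \bC^*,
\]
for the higher coefficients (degrees $1,\dots,d$) of $w_j$ in terms of all the other coefficients, $c$, and $\tau$. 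Degree by degree, the coefficient of $t^m$ in $g_\tau(v(t))$ has the form $\partial_j g_\tau(v(0))\cdot (\text{coefficient of } t^m \text{ in } v_j) + (\text{terms in lower coefficients})$. This is affine with unit leading coefficient in the new unknown, so the equation is solvable, with analytic dependence on all data. We treat degrees $m<e$ (prescribing the value $0$), $m=e$ (prescribing $c\neq 0$), and $m>e$ (free).

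It follows that, locally over the zero locus, the space in \eqref{eqnprojectiontoT} is analytically a product of the base with the following fibre: the free coefficients of the $w_i$ for $i\neq j$, times the free coefficients of degree $>e$ in the normalisation, times $\bC^*$ for $c$. Strictly, it is this product intersected with the condition that the jet lands in $U$, which is only a condition on $\ev$ and so is already encoded in the base. This gives the local triviality.

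\textbf{Main obstacle.} The hardest step is the careful bookkeeping in the triangular solve. One must check that the unknowns we solve for (the coefficients of $v_j = w_j$, since $\nu_j = 0$) really enter each degree-$m$ equation with coefficient $\partial_j g_\tau(y)$ times a unit, and never with lower-order contributions from the factor $t^{\nu_i}$. One must also check that this works uniformly in $\tau$. Transversality to $E$, rather than mere smoothness of $g_\tau^{-1}(0)$, is exactly what guarantees that the nonvanishing partial is in a direction $j\notin I$.
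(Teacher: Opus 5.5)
Your argument is correct, but you implement it differently from the paper. The paper localises and makes a $\tau$-dependent analytic change of coordinates preserving $E$ so that $g_\tau=z_1-1$. Since such a change permutes the orbits $\scrX_\nu$, the defining condition collapses to ``$v_1(t)-1$ vanishes to order exactly $d-\sum_j k_j\nu_j$'' (the unit $\prod_i w_i^{k_i}$ is silently absorbed), and there is no constraint at all when $\nu_1\neq 0$.

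You instead stay in the original coordinates and run the implicit function argument directly on jet coefficients. The $t^m$-coefficient of $g_\tau(v(t))\prod_i w_i^{k_i}$ is $\partial_j g_\tau(y)\prod_i w_i(0)^{k_i}$ times the $t^m$-coefficient of $w_j$, plus terms in free or previously determined data. This still holds when $k_j>0$, because the unit only contributes coefficients of $w_j$ of degree $<m$. A triangular solve then gives the trivialisation.

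Both proofs use transversality in the same way: to find a coordinate $z_j$ with $\nu_j=0$ along which $g_\tau$ is submersive. The paper's normal form buys brevity. Your version avoids checking that the coordinate change induces homeomorphisms of jet spaces compatible with the projection. It also treats the complement region explicitly: there the order of $g_\tau(v(t))$ is $0$, so the set is either empty or the pullback of the trivial bundle $\ev\colon O\to\ev(O)$. By contrast, the normal form $g_\tau=z_1-1$, as a pure coordinate change, is only available near the zero locus.

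Two small slips to fix:
\begin{enumerate}
\item Since $h_\tau(v(t))=t^K G(t)$ with $G=g_\tau(v(t))\prod_i w_i^{k_i}$, the condition is $G\equiv c\,t^e \bmod t^{e+1}$, not $\bmod\ t^{d+1}$ as in your display. Your own remark that degrees $m>e$ are free shows this is what you meant.
\item Over the zero locus with $e=0$ the set is empty, because the constant term of $G$ is $g_\tau(y)\prod_i w_i(0)^{k_i}=0$.
\end{enumerate}
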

\begin{proof}
	This is a local statement, so we can assume that $U$ is a small ball and $T$ can be replaced with a small open subspace in the analytic topology.
	If these neighborhoods are sufficiently small, then after a $T$ dependent analytic change of coordinates on $U$ fixing $E$, we can assume that $g_\tau = z_1 - 1$ by our transversality assumption.
	Since this change of coordinates fixes $E$, the orbits of the $\scrX(\bT)$-action on $\scrX(\bC^N)$ intersected with $U$ are permuted by Lemma \ref{lemmaorbitsofscrXbT} and so it is sufficient to prove our Lemma in this new setting.
	Also by Lemma \ref{lemmaorbitsofscrXbT}, we have  $O = \scrX_\nu$ for some $\nu = (\nu_1,\cdots,\nu_N) \in \bN^N$.
	Hence, the domain of \eqref{eqnprojectiontoT}
	is the space of pairs $(v(t),\tau) \in \scrX(U) \times T$ with
	the property $v(t) \in \scrX_\nu$ and
	\begin{equation} \label{eqnhtvanishes}
		h_\tau(v(t)) = a t^d \ \Mod \ t^{d+1}, \ a \in \ \bC^*.
	\end{equation}
	Each $v(t) \in X_\nu$ is of the form
	\begin{equation} \label{vtinnut}
		v(t) = (t^{\nu_1} v_1(t),\cdots, t^{\nu_N} v_N(t))
	\end{equation}
	where $v_1(t),\cdots,v_N(t)$ live in $\scrX(\bC^*)$.
	Therefore, the domain of \eqref{eqnprojectiontoT} is the space of pairs $(v(t),\tau) \in \scrX(U) \times T$ with $v(t)$ written as in \eqref{vtinnut} and $v_j(t) \in \scrX(\bC^*), \ j=1,\cdots,N$, satisfying
	\begin{equation}
		t^{\nu_1} v_1(t) - 1 = a t^{d- \sum_{j=1}^N k_j \nu_j} \ \Mod \ t^{d+1 - \sum_{j=1}^N k_j \nu_j}
	\end{equation}
	for some $a \in \C^*$.
	If $\nu_1 \neq 0$, then this variety is either empty, or there are no constraints on $v_1(t),\cdots,v_N(t)$
	and $\tau$ except the requirement that $\ev(v(t)) \in U$.
	If $\nu_1 = 0$, then these equations tell us that $v_1(t)-1$ vanishes at $0$ to order exactly $d- \sum_{j=1}^N k_j \nu_j$ and there are no constraints on $v_2(t),\cdots,v_N(t)$ and $\tau$ except the requirement that $\ev(v(t)) \in U$.
	Hence \eqref{eqnprojectiontoT} is a topologically locally trivial fibration over \eqref{eqnfibrationregion} or its complement in $(\ev(O) \cap U) \times T$.
\end{proof}

\begin{lemma} \label{lemmalocallyconstantinductionstep}
	Let $p: X \to T$ be a morphism and $Z \subset X$ a closed subvariety and $U = X-Z$. Suppose that $R(p|_Z)_!(\bQ_Z)$ and $R(p|_U)_!(\bQ_U)$ are (analytically) locally constant sheaves. Then so is $Rp_!\bQ_X$.
\end{lemma}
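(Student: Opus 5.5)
We will use the distinguished triangle for compactly supported pushforward attached to the decomposition $X = U \sqcup Z$. Write $i : Z \hookrightarrow X$ for the closed inclusion and $j : U \hookrightarrow X$ for the open inclusion. On $X$ there is the standard short exact sequence of sheaves
\begin{equation}
	0 \to j_!\bQ_U \to \bQ_X \to i_*\bQ_Z \to 0 .
\end{equation}
Applying $Rp_!$ gives a distinguished triangle in the derived category of sheaves on $T$:
\begin{equation}
	R(p|_U)_!\bQ_U \to Rp_!\bQ_X \to R(p|_Z)_!\bQ_Z \xrightarrow{+1} .
\end{equation}
Here we use $Rp_! \circ j_! = R(p|_U)_!$ and $Rp_! \circ i_* = Rp_!\circ i_! = R(p|_Z)_!$, since $i$ is proper.

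Next we take cohomology sheaves. The triangle gives a long exact sequence of constructible sheaves on $T$:
\begin{equation}
	\cdots \to R^{k}(p|_U)_!\bQ_U \to R^kp_!\bQ_X \to R^k(p|_Z)_!\bQ_Z \to R^{k+1}(p|_U)_!\bQ_U \to \cdots .
\end{equation}
By hypothesis the outer terms are locally constant sheaves. We read this as saying that the cohomology sheaves of $R(p|_U)_!\bQ_U$ and $R(p|_Z)_!\bQ_Z$ are locally constant, which is the sense used later in the paper. Since the varieties involved are algebraic, these sheaves are constructible with finite-dimensional stalks, so the outer terms are local systems of finite rank.

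It remains to show that $R^kp_!\bQ_X$ is locally constant. Break the long exact sequence into short exact sequences:
\begin{equation}
	0 \to \mathrm{coker}\big(R^{k-1}(p|_Z)_!\bQ_Z \to R^k(p|_U)_!\bQ_U\big) \to R^kp_!\bQ_X \to \ker\big(R^k(p|_Z)_!\bQ_Z \to R^{k+1}(p|_U)_!\bQ_U\big) \to 0 .
\end{equation}
The proof then needs two facts about locally constant sheaves of finite-dimensional $\bQ$-vector spaces. First, kernels and cokernels of morphisms between them are again locally constant: work locally on a contractible (simply connected, locally connected) open set where both sheaves are constant, and note that any sheaf morphism between constant sheaves on a connected set is constant. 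Second, an extension of one locally constant sheaf by another is locally constant: on a contractible open $V$ we have $H^1(V;\mathcal{L})=0$ for a constant sheaf $\mathcal{L}$, so the extension splits as a direct sum of constant sheaves over $V$. Analytically $T$ is locally contractible, so such neighbourhoods exist.

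The only delicate point is this last extension step, which needs the vanishing of $H^1$ on small contractible neighbourhoods; it is standard and is where local contractibility of complex varieties is used. An alternative is to note that a sheaf is locally constant iff its restrictions to small balls are constant, and to apply the five lemma to global sections over nested balls. The main steps, in order, are therefore the triangle, the long exact sequence, and the closure properties of local systems.
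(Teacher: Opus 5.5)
Your proposal is correct and is essentially the paper's argument. The paper writes down the same long exact sequence of compactly supported higher direct images and concludes by citing that locally constant sheaves form a weak Serre subcategory of constructible sheaves, which amounts to the closure under kernels, cokernels and extensions that you verify directly (your derivation of the triangle from $0 \to j_!\bQ_U \to \bQ_X \to i_*\bQ_Z \to 0$ and your local-contractibility argument for extensions just fill in details the paper leaves to the references).
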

\begin{proof}
	We have an exact sequence:
	\begin{equation} \label{pushforwardexact}
		R^{i-1}(p|_Z)_!(\bQ_Z) \to
		R^i(p|_U)_!(\bQ_U) \to
		R^ip_!\bQ_X \to
		R^i(p|_Z)_!(\bQ_Z) \to
		R^{i+1}(p|_U)_!(\bQ_U).
	\end{equation}
	Since locally free sheaves form a weak Serre subcategory of all constructible sheaves (See
	\cite[Proposition 2.5]{lunts2026categories} and \cite[02MO, 0754]{stacks-project}),
	we get that the middle term term in \eqref{pushforwardexact} is locally free.
\end{proof}

\begin{corollary} \label{corollarystratifiedfibration}
	Let $p : Y \to T$ be a morphism of varieties. Suppose that $Y$ admits a stratification so that
	$R(p|_S)_!(\bQ_S)$ is a locally constant sheaf for each stratum $S$.
	Then $Rp_!\bQ_Y$ is a locally constant sheaf.
\end{corollary}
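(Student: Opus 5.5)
The plan is to prove Corollary \ref{corollarystratifiedfibration} by induction on the number of strata, peeling off one closed stratum at a time and applying Lemma \ref{lemmalocallyconstantinductionstep} at each step. I take ``stratification'' to mean a finite partition of $Y$ into locally closed subvarieties $S_1,\dots,S_r$, ordered so that each partial union $Y_k := S_1 \cup \dots \cup S_k$ is closed in $Y$. (If the strata are only assumed to satisfy the frontier condition, such an ordering exists: order the strata by nondecreasing dimension. Since the closure of a stratum is a union of that stratum and strata of strictly smaller dimension, each $Y_k$ is then a union of closures of strata, hence closed.) Then for each $k$, the stratum $S_k$ is open in $Y_k$, and its complement $Y_k - S_k = Y_{k-1}$ is closed in $Y_k$.

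Next I prove by induction on $k$ that $R(p|_{Y_k})_!\bQ_{Y_k}$ is locally constant. For $k=1$, $Y_1 = S_1$ and the claim is the hypothesis. For the inductive step I apply Lemma \ref{lemmalocallyconstantinductionstep} to the morphism $p|_{Y_k} : Y_k \to T$, with closed subvariety $Z = Y_{k-1}$ and open complement $U = S_k$. The pushforward $R(p|_{Y_{k-1}})_!\bQ$ is locally constant by the inductive hypothesis. The pushforward $R(p|_{S_k})_!\bQ$ is locally constant by assumption. The lemma therefore gives that $R(p|_{Y_k})_!\bQ_{Y_k}$ is locally constant. Taking $k=r$, where $Y_r = Y$, gives the corollary.

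The only point needing care is the ordering of the strata. In particular, Lemma \ref{lemmalocallyconstantinductionstep} requires $Z$ closed, so the open/closed roles must be as above. Everything else is formal: the long exact sequence of compactly supported pushforwards, and the closure of locally constant sheaves under extensions, are already packaged in Lemma \ref{lemmalocallyconstantinductionstep}.
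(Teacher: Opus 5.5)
Your proof is correct and follows the paper's argument. Both proofs induct over closed unions of strata ordered by dimension, applying Lemma \ref{lemmalocallyconstantinductionstep} with the newest piece as the open set $U$ and the previous union as the closed set $Z$. The only difference is that you add one stratum at a time, whereas the paper adds all strata of dimension $i+1$ at once. Your version spares you the observation, left implicit in the paper, that equidimensional strata are each open and closed in their union, so their compactly supported pushforwards simply add.
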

\begin{proof}
	Let $S_i \subset X$ be the union of all the strata of dimension $i$ and $S_{\leq i}$ the union of all the strata of dimension $\leq i$ for each $i \in \bN$.
	Suppose (by induction) $R(p|_{S_{\leq i}})_!(\bQ_{S_{\leq i}})$ is locally constant for some $i$.
	Then by Lemma \ref{lemmalocallyconstantinductionstep} with $X=S_{\leq i+1}$, $Z=S_{\leq i}$ and $U=S_{i+1}$, we have $R(p|_{S_{\leq i+1}})_!(\bQ_{S_{\leq i+1}})$ is locally constant. We are now done by induction since $Y=S_{\leq k}$ for some $k$.
\end{proof}

\begin{lemma} \label{lemmaTopTrivual}
	Let $(f_\tau)_{\tau\in T}$ be an algebraic family of Newton non-degenerate polynomials $\C^N \to \C$ with the same Newton polyhedron. Then $(R\pi_{f_T})_!\bQ_{\chi_{f_T}}$ is a locally constant sheaf on $T$.
\end{lemma}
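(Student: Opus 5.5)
We lift the problem to the simultaneous log resolution of Corollary \ref{cor:simultaneous} and stratify by torus orbits. We then apply Lemma \ref{lemmaarcspaceintersection} on each stratum and glue using Corollary \ref{corollarystratifiedfibration}.

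\textbf{Step 1: lift to the resolution.} Let $\rho : Y \to \bC^N$ be the toric resolution from a regular unimodular subdivision of the dual fan of the common Newton polyhedron, and write $\rho_T = \rho \times \mathrm{id}_T$. Every $d$-jet $u \in \chi_{f_\tau}$ has $u(0)=0$ and $f_\tau(u(t))$ of order exactly $d$. Hence its generic point lies in the torus $\bT$, where $\rho$ is an isomorphism. A $d$-jet is not an arc, so lifting needs care. I would use the standard fact (as in \cite{ELM}) that contact loci of a log resolution are computed upstairs: jets whose order of contact with every coordinate hyperplane and with $f_\tau$ is bounded by $d$ lift uniquely to $d$-jets on $Y$. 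More precisely, lifting identifies $\chi_{f_T}$, up to a piecewise trivial fibration whose fibres are affine spaces of dimension locally constant in the combinatorial data, with the space of pairs $(v,\tau)$ with $v \in \scrX(Y)$ satisfying:
\begin{itemize}
\item $\rho(v(0)) = 0$;
\item $f_\tau\circ\rho(v(t)) = a t^d \ \Mod\ t^{d+1}$ with $a \in \bC^*$.
\end{itemize}
Because the piecewise trivial structure is uniform in $\tau$ (it only depends on the fan), it does not affect local constancy of $R\pi_!\bQ$.

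\textbf{Step 2: stratify and apply the local lemma.} On each toric chart $\bC^N \subset Y$, the divisor $E$ is the toric boundary. There $f_\tau\circ\rho = g_\tau \prod z_i^{k_i}$, with the $k_i$ determined by the Newton polyhedron and hence independent of $\tau$. Newton non-degeneracy says that $g_\tau^{-1}(0)$ is smooth and transverse to every torus orbit. So Lemma \ref{lemmaarcspaceintersection} applies. We stratify the lifted space by:
\begin{itemize}
\item the $\scrX(\bT)$-orbit $O = \scrX_\nu$ in each chart, refined to orbits of the global toric structure on $Y$ so the strata are well defined;
\item whether $\ev(v)$ lies in $\{g_\tau = 0\}$ or in its complement.
\end{itemize}
The condition $\rho(\ev(v))=0$ is a union of torus orbits, so it respects this stratification. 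On each stratum $S$, Lemma \ref{lemmaarcspaceintersection} shows that $S \to \{(y,\tau)\} \subset (\ev(O)\cap\rho^{-1}(0))\times T$ is a topologically locally trivial fibration. The base is either $(\ev(O)\cap \rho^{-1}(0)) \times T$ or the subfamily $\{g_\tau=0\}$ inside it.

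\textbf{Step 3: show each base is locally trivial over $T$.} The first base is a product. For the second, $\{g_\tau = 0\}\cap \overline{\ev(O)}$ is a family of smooth hypersurfaces transverse to all torus orbits in the compact toric subvariety $\rho^{-1}(0)$. Thom's first isotopy lemma, or Ehresmann applied stratum by stratum in the proper family, therefore makes it locally trivial over $T$ compatibly with the orbit strata. Composing the two fibrations shows that each stratum maps to $T$ by a locally trivial fibration, so $R(p|_S)_!\bQ_S$ is locally constant. Corollary \ref{corollarystratifiedfibration} then gives local constancy of $(R\pi_{f_T})_!\bQ_{\chi_{f_T}}$.

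\textbf{Main obstacle.} I expect the main obstacle to be Step 1: making the jet-lifting precise and $\tau$-uniform. Finite-order jets need not lift, and the lifting fibres vary. I would first try to restrict to the finitely many orbit types $\nu$ with $\sum k_i\nu_i \le d$, since otherwise the contact condition fails. I would then work with $d'$-jets for $d' \gg d$ and truncate, using that the truncation maps are affine bundles whose ranks depend only on $\nu$.
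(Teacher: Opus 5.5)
Your skeleton matches the paper's: simultaneous toric resolution, orbit stratification, Lemma \ref{lemmaarcspaceintersection} upstairs, gluing by Corollary \ref{corollarystratifiedfibration}. Your Step 3 usefully makes explicit the local triviality of the bases over $T$, which the paper leaves implicit. (The other base is the complement of $\{g_\tau=0\}$, not the whole product, but the isotopy argument for the pair handles it.)

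The gap is Step 1, which you flag but do not resolve. The fact you cite concerns arcs, or jets of level large compared with the Jacobian order along them and away from the exceptional locus. It is false for $d$-jets:
\begin{enumerate}
\item Lifts are not unique. For the blow-up of $\bC^2$, the $2$-jet $(t,t^2)$ lifts to $(t,t+ct^2)$ in the chart $(x,y/x)$ for every $c$.
\item The preimage of one jet of $\chi_{f_\tau}$ can meet several $\scrX(\bT)$-orbits of $\scrX(Y)$. The jet $(t,0) \bmod t^3$ lifts to $(t,ct^2)$ for all $c$, and $c=0$ and $c\neq 0$ lie in different orbits.
\item Fibres need not be affine spaces. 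If the chosen subdivision blows up a coordinate subspace, jets of $\chi_{f_\tau}$ lying in it can have lifts based along a positive-dimensional part of $\rho^{-1}(0)$.
\end{enumerate}
Even granting affine fibres on pieces, local constancy of $R\pi_!\bQ$ for the whole lifted space does not transfer to $\chi_{f_T}$. For that you must control each piece separately, with a genuine bundle structure. Passing to $d'$-jets and truncating does not touch this upstairs/downstairs comparison.

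The missing idea is that you never need to lift $\chi_{f_T}$ at all; equivariance does the work.
\begin{enumerate}
\item Stratify $\chi_{f_T}$ downstairs by the $\tau$-independent orbits $O=\scrX_\nu$ with $\ev(O)=\{0\}$ (Lemma \ref{lemmaorbitsofscrXbT}).
\item For each such $O$, choose a \emph{single} orbit $O'\subset\scrX(Y)$ mapping equivariantly onto $O$, for instance the orbit of the $d$-jet of the lift of the monomial arc $(t^{\nu_1},\dots,t^{\nu_N})$.
\item Let $K'\subset K$ be the stabilisers of points of $O'$ and $O$. Then $O'\to O$ is the quotient by the free action of the abelian group $K/K'$, a product of copies of $\bC$ and $\bC^*$.
\item Put $\chi_O:=\chi_{f_T}\cap(O\times T)$ and let $\chi_{O'}$ be its full preimage in $O'\times T$. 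The map $Q:\chi_{O'}\to\chi_O$ is the same quotient, so $RQ_!\bQ$ is a sum of shifts of $\bQ_{\chi_O}$.
\item Hence $Rp_!\bQ_{\chi_O}$ is a direct summand of $Rp'_!\bQ_{\chi_{O'}}$, where $p,p'$ are the projections to $T$.
\end{enumerate}
The latter is locally constant by your Steps 2--3 applied to the one orbit $O'$. Corollary \ref{corollarystratifiedfibration} over $\nu$ then finishes the proof. This is the paper's argument.
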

\begin{proof}
	We will show $\chi_{f_\tau}$ admits a stratification so that $R(\pi_{f_T}|_S)_!(\bQ_S)$ is locally constant for each stratum $S$. Our result will then follow from Corollary \ref{corollarystratifiedfibration}.

	Let $f: \bC^N \times T \to \bC$ be the map $(z,\tau) \mapsto f_\tau(z)$ and choose a simultaneous toric log resolution as in Corollary \ref{cor:simultaneous}.
	Let $O$ be an orbit of the $\scrX(\bT)$ action on $\scrX(\bC^N)$ so that $\ev(O) = \{0\}$.
	Now $\scrX(\bT)$ acts on $\scrX(Y)$ too and there is an orbit $O'$ of this group action that maps equivariantly to $O$.
	Let $K \subset \scrX(\bT)$ be the stabilizer group of any point of $O$ and $K' \subset K$ the stabilizer group of any point of $O'$.
	Let $\chi \subset \scrX(Y) \times T$ be the subspace of pairs $(v(t),\tau)$ where $\tau \in T$ and $v(t)$ maps to $\chi_{f_\tau}$.
	Let $\chi_{O'} := \chi \cap (O' \times T)$
	and let $\chi_O := \chi_{f_T} \cap (O \times T)$.
	Let $p' : \chi_{O'} \to T$ and $p : \chi_O \to T$ be the natural projection maps.

	Since $Y \times T$ is a simultaneous log resolution of the map
	\begin{equation}
		f_T : Y \times T \to \bC, \ f_T(y,\tau) = f_\tau(y),
	\end{equation}
	we get that $p'$ is a topologically locally trivial fibration by Lemma \ref{lemmaarcspaceintersection} applied to a toric chart of $Y$ containing the $\bT$-orbit $\ev(O')$.
	Hence $Rp'_!\bQ_{\chi_{O'}}$ is locally constant.

	Also $\chi_O$ is the quotient of $\chi_{O'}$ by the fiber preserving proper free action of $K/K'$.
	Let $Q : \chi_{O'} \to \chi_O$ be this quotient map.
	Since each fiber of $Q$ is isomorphic to a product of copies of $\bC^*$ and $\bC$, we get that $RQ_!\bQ_{\chi_{O'}}$ is a direct sum of shifts of $\bQ_{\chi_O}$.
	Hence $Rp'_!\bQ_{\chi_{O'}}$
	is a direct sum of shifts of $Rp_!\bQ_{\chi_O}$.
	This implies that $Rp_!\bQ_{\chi_O}$ is the kernel of an idempotent endomorphism of $Rp'_!\bQ_{\chi_{O'}}$.
	Hence $Rp_!\bQ_{\chi_O}$ is locally constant.

	Hence by Lemma \ref{lemmaorbitsofscrXbT},
	$R(\pi_\nu)_! \bQ_{\chi_\nu}$ is locally constant where
	$\chi_\nu := \chi_{f_T} \cap (\scrX_\nu \times T)$ and
	$\pi_\nu : \chi_\nu \to T$ is the natural projection map for each $\nu \in \bN^N_{>0}$.
	Since the spaces $(\chi_\nu)_{\nu \in \bN^N_{>0}}$ stratify $\chi_{f_T}$, our result follows from Corollary \ref{corollarystratifiedfibration}.
\end{proof}

\section{Mixed Hodge Structure Computations}\label{sec:mhs}

Let us fix a polynomial $g : \bC^N \to \bC$ whose degree $<d$ terms are all $0$ and whose degree $d$ term is $G$.
Then for each $k \in \bN$, $g_k$
denotes the $2k$-fold stabilisation, as in Definition \ref{defnequivalence}.

\begin{defn}
	If $u,v \in \bC^N$, recall that  $uv$ denotes their dot product.
	Similarly if $u(t),v(t) \in \scrX(\bC^N)$, then we define $u(t)v(t) \in \scrX(\bC)$ to be their dot product.
\end{defn}

Recall the various subspaces of  contact loci  from Definition \ref{defnmodifiedcontactlocus}.

\begin{lemma} \label{LemmaChiComputation}
	For each $k \in \bN$,
	$\chi_{g_k}$ is isomorphic to
	\begin{equation} \label{eqnMainChiEqn}
		\begin{aligned}
			\bC^{(d-1)N + 2k} \times \Big\{ & (u_1,\cdots,u_{d-1},v_1,\cdots,v_{d-1},w_1) \in (\bC^k)^{2d-2} \times \bC^N \ : \ \\
			                                & \sum_{j=1}^{m-1} u_j v_{m-j} = 0, \ m=2,\cdots,d-1,                               \\ &
			\sum_{j=1}^{d-1} u_j v_{d-j} + G(w_1) \neq 0
			\Big\}.
		\end{aligned}
	\end{equation}
	Also $\chi'_{g_k}$ is isomorphic to  the  variety defined by the same equations except that the last equation is zero (rather than non zero) and $\chi''_{g_k}$ is also isomorphic to the same variety, but with the last equation removed completely.
\end{lemma}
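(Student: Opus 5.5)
The plan is a direct computation in coordinates on the jet space, organised by powers of $t$. A $d$-jet in $\scrX(\bC^k \times \bC^k \times \bC^N)$ with $\ev = 0$ has the form
\begin{equation*}
	u(t) = \sum_{j=1}^d u_j t^j, \quad v(t) = \sum_{j=1}^d v_j t^j, \quad w(t) = \sum_{j=1}^d w_j t^j,
\end{equation*}
with $u_j, v_j \in \bC^k$ and $w_j \in \bC^N$. This identifies $\{\ev = 0\}$ with an affine space $(\bC^k)^{2d} \times (\bC^N)^d$, and $\chi_{g_k}$, $\chi'_{g_k}$, $\chi''_{g_k}$ are subvarieties cut out by conditions on the coefficients of $g_k(u(t),v(t),w(t)) = u(t)v(t) + g(w(t)) \ \Mod \ t^{d+1}$. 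For $\chi'_{g_k}$ I read ``jets of morphisms into $g_k^{-1}(0)$'' scheme-theoretically, namely as jets with $g_k(u(t),v(t),w(t)) \equiv 0 \ \Mod \ t^{d+1}$.

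\textbf{Step 1: the dot-product term.} Expand $u(t)v(t)$. The coefficient of $t^m$ is $\sum_{j=1}^{m-1} u_j v_{m-j}$ for $2 \leq m \leq d$. The coefficient of $t^1$ is $0$, since $u_0 = v_0 = 0$.

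\textbf{Step 2: the term $g(w(t))$.} Because $w(t)$ is divisible by $t$, any monomial of degree $e$ in $g$ contributes only to powers $t^{\geq e}$. Since $g$ has no terms of degree $< d$, $g(w(t))$ has no terms below $t^d$. The only contribution at $t^d$ comes from the degree-$d$ part $G$, and it equals $G(w_1)$, since $G(t w_1 + t^2 w_2 + \cdots) = t^d G(w_1) + O(t^{d+1})$. Terms of $g$ of degree $> d$ vanish modulo $t^{d+1}$.

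\textbf{Step 3: read off the three loci.} Combining Steps 1 and 2, the condition $g_k(\ldots) = a t^d \ \Mod \ t^{d+1}$ with $a \in \bC^*$ becomes:
\begin{itemize}
	\item the vanishing of $\sum_{j=1}^{m-1} u_j v_{m-j}$ for $m = 2, \ldots, d-1$;
	\item the nonvanishing of $\sum_{j=1}^{d-1} u_j v_{d-j} + G(w_1)$.
\end{itemize}
Neither condition involves $u_d$, $v_d$ or $w_2, \ldots, w_d$. These coordinates therefore split off as a free factor $\bC^{2k} \times \bC^{(d-1)N}$, which gives \eqref{eqnMainChiEqn}.

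For $\chi'_{g_k}$ the same expansion shows that the defining condition is the same list of equations, with the last expression required to vanish. For $\chi''_{g_k} = \chi_{g_k} \cup \chi'_{g_k}$, the union of the ``$\neq 0$'' and ``$= 0$'' loci is exactly the locus where the last condition is dropped.

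\textbf{Main obstacle.} There is no real obstacle; the only care needed is bookkeeping. The isomorphism is just the linear coordinate identification above, so it is an isomorphism of varieties. The fixed $d$ in the contact locus must equal the degree of $G$, since this is what makes the degree-$d$ part appear exactly at order $t^d$ while higher-order terms of $g$ drop out. One should also note explicitly that no constraint arises at order $t^1$.
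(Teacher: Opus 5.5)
Your proposal is correct and follows the same approach as the paper. Both expand $u(t)v(t)+g(w(t))$ modulo $t^{d+1}$ on jets with vanishing constant terms, identify the $t^m$ coefficients as $\sum_{j=1}^{m-1}u_jv_{m-j}$ for $m<d$ and $\sum_{j=1}^{d-1}u_jv_{d-j}+G(w_1)$ for $m=d$, and note that $u_d$, $v_d$, $w_2,\ldots,w_d$ are unconstrained. Your write-up is a bit more explicit than the paper's, which leaves $w_0=0$ and the jet-scheme reading of $\chi'_{g_k}$ implicit.
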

\begin{proof}
	The jet
	\begin{equation} \label{eqnuvwoft}
		(u(t),v(t),w(t)) \in \scrX(\bC^k) \times \scrX(\bC^k) \times \scrX(\bC^N) = \scrX(\bC^k \times \bC^k \times \bC^N),
	\end{equation}
	where
	\[
		u(t)=u_0 + \cdots + u_dt^d, \  v(t)=v_0+\cdots+v_dt^d, \  w(t)=w_0+\cdots+w_dt^d,
	\]
	with
	\[u_0,\cdots,u_d,v_0,\cdots,v_d \in \bC^k, \ w_0,\cdots,w_d \in \bC^N
	\] lies in $\chi_{g_k}$ if and only if $u_0=0$, $v_0=0$ and the coefficient $t^m$ of $u(t)v(t)$ vanishes for each $m <d$ and the coefficient $t^d$ of $u(t)v(t)+G(w(t))$ is non-zero. These equations hold if and only if $u_0=0$, $v_0=0$,
	$\sum_{j=1}^{m-1} u_j v_{m-j}=0$ for each $m=2,\cdots,d-1$
	and $\sum_{j=1}^{d-1} u_j v_{d-j}+G(w_1) \neq 0$ which corresponds to the variety in the statement of our Lemma.
	A similar argument holds for $\chi'_{g_k}$ and $\chi''_{g_k}$.
\end{proof}

\begin{defn} \label{defnchiprimeg}
	For each $i=0,\cdots,d$ we define $\chi'_{g_k,i} \subset \chi'_{g_k}$ to be the subspace of elements
	$(u(t),v(t),w(t))$ as in \eqref{eqnuvwoft} so that $u_0,\cdots,u_{i-1} = 0$ where
	\begin{equation} \label{eqnuoft}
		u(t) = u_0 + u_1 t + \cdots + u_d t^d, \ u_j \in \C^k, \ j=0,\cdots,d.
	\end{equation}
	Similarly for each $i=0,\cdots,d-1$ we define $\chi''_{g_k,i} \subset \chi''_{g_k}$ to be the subspace of elements
	$(u(t),v(t),w(t))$ as in \eqref{eqnuvwoft} so that $u_0,\cdots,u_{i-1} = 0$ from \eqref{eqnuoft}.
\end{defn}

\begin{lemma} \label{lemmaStrataFibration}
	For each $i= 0,\ldots,d$,
	$\chi'_{g_k,i}-\chi'_{g_k,i+1}$ is isomorphic to the total space of an affine bundle\footnote{By an \emph{affine bundle} we mean a morphism which, Zariski locally over the base, is isomorphic to a projection map $\bC^m \times U \to U$.} over $\bC^k-0$.
	Similarly for each $i = 0,\ldots,d-1$,
	$\chi''_{g_k,i}-\chi''_{g_k,i+1}$ is isomorphic to the total space of an affine bundle over $\bC^k-0$.
\end{lemma}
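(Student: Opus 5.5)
I would work in the explicit coordinates of Lemma \ref{LemmaChiComputation}, where $u_0=v_0=0$ and the defining equations are
\[
\sum_{j=1}^{m-1} u_j v_{m-j}=0 \quad (m=2,\ldots,d-1),
\]
together with $\sum_{j=1}^{d-1}u_jv_{d-j}+G(w_1)=0$ in the case of $\chi'_{g_k}$; for $\chi''_{g_k}$ this last equation is dropped. On the stratum $\chi'_{g_k,i}-\chi'_{g_k,i+1}$ (resp.\ $\chi''_{g_k,i}-\chi''_{g_k,i+1}$) we have $u_1=\cdots=u_{i-1}=0$ and $u_i\neq 0$. The base map will be $(u,v,w)\mapsto u_i\in\bC^k-0$. (For $i=0$ the stratum is empty, since $u_0=0$, so that case is vacuous.)

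The key observation is that, once $u_1,\ldots,u_{i-1}$ vanish, the equation indexed by $m$ becomes
\[
u_i\cdot v_{m-i} = -\sum_{j=i+1}^{m-1} u_j v_{m-j}
\]
for $m=i+1,\ldots,d-1$; equations with $m\le i$ are vacuous. In the $\chi'$ case there is additionally
\[
u_i\cdot v_{d-i} = -\sum_{j=i+1}^{d-1}u_jv_{d-j}-G(w_1).
\]
Each right-hand side involves only $v_l$ with $l<m-i$, the free coordinates $u_{i+1},\ldots,u_d$, and $w_1$. So the system is triangular in $v_1,v_2,\ldots$: each equation fixes exactly one linear functional of a single new vector $v_{m-i}$, namely its pairing with the nonzero vector $u_i$.

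To get Zariski local triviality, I would cover $\bC^k-0$ by the opens $U_r=\{u_i^{(r)}\neq 0\}$, $r=1,\ldots,k$. On the preimage of $U_r$, each equation can be solved for the $r$-th coordinate $v_{m-i}^{(r)}$ as a polynomial in the remaining variables divided by $u_i^{(r)}$. Proceeding inductively in $m$, this gives an isomorphism of the stratum over $U_r$ with $U_r\times\bC^M$. Here the $\bC^M$ factor consists of the free coordinates: $u_{i+1},\ldots,u_d$, all $w_j$, the remaining coordinates of the constrained $v$'s, and the unconstrained $v$'s (including $v_d$). The inverse map is polynomial, so this is an isomorphism of varieties compatible with projection to $U_r$, which is exactly an affine bundle in the sense of the footnote.

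The main point needing care is the top index. In the $\chi''$ case with $i=d-1$ there are no equations at all, so the stratum is trivially a product. In the $\chi'$ case the $G(w_1)$ equation is solved for $v_{d-i}$ only when $d-i\ge 1$, which uses $u_i\neq0$ with $i\le d-1$. For the extreme stratum $i=d$, where the last equation degenerates to $G(w_1)=0$, I would check carefully which convention for $\chi'_{g_k,d+1}$ the statement intends, and verify that the strata as indexed avoid this degenerate case. I expect this bookkeeping of index ranges, rather than the triangular solving, to be the only real obstacle.
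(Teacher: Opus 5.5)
Your proposal is correct and is essentially the paper's proof: project to $u_i$, trivialise over the charts $\{u_i^{(r)}\neq 0\}$ by solving the triangular system for the $r$-th coordinates of $v_1,\ldots,v_{d-i}$, and take all remaining coordinates (including $w_1$ in the $\chi'$ case) as the fibre. Your worry about the top index is justified: for $\chi'$ with $i=d$ the stratum is $(\bC^k-0)\times\bC^{kd+(d-1)N}\times G^{-1}(0)$, which is not an affine bundle, so the stated range should stop at $i=d-1$ (the paper's trivialisation, which solves for $v_{d-i,l}$, also only makes sense there), and this is enough because $\chi'_{g_k,d}$ is handled separately in Lemma \ref{lemmaAlluVanishComputation}.
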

\begin{proof}
	Fix a value of $i$. We use the vectors $u_1,\cdots,u_k,v_1,\cdots,v_d,w_0,\cdots,w_d$ from the proof of Lemma \ref{LemmaChiComputation}.
	Let $u_{j,l}$ be the $l$th coordinate of $u_j$ for each $j \geq i$ and each $l \in \{1,\cdots,k\}$.
	Similarly define $v_{j,l}$ for each such $j,l$.
	Define
	\begin{equation}
		v_{j,\neq l} := (v_{j,1},\cdots,v_{j,l-1},v_{j,l+1},\cdots,v_{j,k})
	\end{equation}
	for each $j \geq i$ and $l \in \{1,\cdots,k\}$.

	We claim that the map
	\begin{equation}
		u_i: \chi'_{g_k,i}-\chi'_{g_k,i+1} \to \C^k-0
	\end{equation}
	is the projection map witnessing the domain as the total space of an affine bundle.
	To do this, we will construct a trivialisation of $u_i$ in the region where $u_{i,l} \neq 0$ for some fixed $l \in \{1,\cdots,k\}$.
	By Lemma \ref{LemmaChiComputation}, we have $(\chi'_{g_k,i}-\chi'_{g_k,i+1}) \cap \{u_{i,l} \neq 0\}$ is isomorphic to
	\begin{equation} \label{eqnMainChiEqn2}
		\begin{aligned}
			\bC^{(d-1)N+2k} \times & \Big\{(u_{i+1},\cdots,u_{d-1},v_1,\cdots,v_{d-1},w_1)  \in (\bC^{k})^{2(d-1)-i} \times \bC^N \ : \
			u_{i,l} \neq 0,                                                                                                                                      \\ &
			v_{m-i,l} = -\frac{1}{u_{i,l}}\Big(\sum_{\underset{l' \neq l}{l'=1}}^k u_{i,l'}v_{m-i,l'} + \sum_{j=i+1}^{m-1} u_j v_{m-j}\Big), \ m=i+1,\cdots,d-1, \\ &
			v_{d-i,l} = -\frac{1}{u_{i,l}}\Big(\sum_{\underset{l' \neq l}{l'=1}}^k u_{i,l'}v_{d-i,l'} + \sum_{j=i+1}^{d-1} u_j v_{d-j}+G(w_1)\Big) \Big\}.
		\end{aligned}
	\end{equation}
	Therefore our Zariski local trivialisation is given by:
	\begin{equation}
		\begin{aligned}
			 & (\chi'_{g_k,i}-\chi'_{g_k,i+1}) \cap \{u_{i,l} \neq 0\} \to \bC^{(k-1)(d-i) + kd + Nd} \times (\bC^{l-1} \times \bC^* \times \bC^{k-l-1}),
			\\  & (u_i,\cdots,u_d,v_1,\cdots,v_d,w_1,\cdots,w_d) \to \\ &  ((u_{i+1},\cdots,u_d,v_{1,\neq l},\cdots,v_{d-i,\neq l},v_{d-i+1},\cdots,v_d,w_1,\cdots,w_d),u_i)
		\end{aligned}
	\end{equation}
	A similar argument holds for $\chi''_{g_k,i}-\chi''_{g_k,i+1}$.
\end{proof}

\begin{lemma} \label{lemmaAlluVanishComputation}
	We have $\chi'_{g_k,d} = \bC^{(d-1)N + kd + k} \times G^{-1}(0)$
	and $\chi''_{g_k,d-1}=\C^{dN+kd+2k}$.
\end{lemma}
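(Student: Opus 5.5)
This is a direct computation from the explicit description of jets in the proof of Lemma \ref{LemmaChiComputation}. I will write a $d$-jet as $(u(t),v(t),w(t))$ with coefficients $u_0,\dots,u_d,v_0,\dots,v_d \in \bC^k$ and $w_0,\dots,w_d \in \bC^N$. I will then specialise the defining equations to the extra vanishing conditions on the $u_j$ and count the free coordinates.

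\textbf{The space $\chi'_{g_k,d}$.} By Definition \ref{defnchiprimeg}, this is the locus in $\chi'_{g_k}$ where $u_0=\cdots=u_{d-1}=0$. With these conditions, every quadratic equation $\sum_{j=1}^{m-1}u_jv_{m-j}=0$ for $m=2,\dots,d-1$ holds automatically. The last equation $\sum_{j=1}^{d-1}u_jv_{d-j}+G(w_1)=0$ reduces to $G(w_1)=0$.

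\textbf{Counting coordinates for $\chi'_{g_k,d}$.} I check which coordinates remain and which are constrained.
\begin{enumerate}
\item The conditions $u_0=0$ and $v_0=0$ are imposed, and $w_0=0$ is imposed by the evaluation condition.
\item The coordinate $u_d$ is free, giving $k$ coordinates.
\item The coordinates $v_1,\dots,v_d$ are free, giving $kd$ coordinates.
\item The coordinates $w_2,\dots,w_d$ are free, giving $(d-1)N$ coordinates.
\item The coordinate $w_1$ is constrained to lie in $G^{-1}(0)\subset\bC^N$.
\end{enumerate}
I also need to check that the higher-order terms of $g$ impose no further condition. Since $g$ has no terms of degree $<d$ and $w(t)$ vanishes at $t=0$, $g(w(t))\equiv G(w_1)t^d \bmod t^{d+1}$. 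So the full jet condition is exactly the one above. This gives $\chi'_{g_k,d}\cong \bC^{(d-1)N+kd+k}\times G^{-1}(0)$.

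\textbf{The space $\chi''_{g_k,d-1}$.} Here $u_0=\cdots=u_{d-2}=0$, and there is no condition involving $G(w_1)$. The equations for $m\le d-1$ involve only $u_j$ with $j\le m-1\le d-2$, so they all vanish identically. The only remaining coordinate to check is $u_{d-1}$. The defining condition of $\chi''$ (as the union $\chi\cup\chi'$) imposes nothing on the $t^d$ coefficient, so $u_{d-1}$ is free. The free coordinates are therefore $u_{d-1},u_d$, giving $2k$; $v_1,\dots,v_d$, giving $kd$; and $w_1,\dots,w_d$, giving $dN$. Hence $\chi''_{g_k,d-1}\cong\bC^{dN+kd+2k}$.

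\textbf{Main obstacle.} The only real point to be careful about is the bookkeeping. I must confirm that the $\bC^{(d-1)N+2k}$ prefactor in Lemma \ref{LemmaChiComputation} accounts for exactly $u_d,v_d,w_2,\dots,w_d$. For $\chi''$, I must confirm that the union $\chi\cup\chi'$ imposes no condition on the $t^d$ coefficient, which is the statement in Lemma \ref{LemmaChiComputation} that the last equation is removed.
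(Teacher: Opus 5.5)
Your proposal is correct and follows the same route as the paper: impose $u_0=\cdots=u_{d-1}=0$ (respectively $u_0=\cdots=u_{d-2}=0$) in the equations of Lemma \ref{LemmaChiComputation} and count the free coordinates. Your bookkeeping matches the statement, including the implicit condition $w_0=0$, the free coordinates $u_d,v_d,w_2,\dots,w_d$, and the freedom of $u_{d-1}$ once the $t^d$ condition is dropped in $\chi''$.
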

\begin{proof}
	All the $u_i$ variables are $0$ for $i < d$ and so we just get a description of $\bC^{(d-1)N+dk + k} \times G^{-1}(0)$ by Lemma \ref{LemmaChiComputation}.
	A similar argument holds for $\chi''_{g_k,d-1}$.
\end{proof}

\begin{lemma} \label{lemmaChiFromChiPrime}
	$\chi_{g_k}= \chi''_{g_k}-\chi'_{g_k}.$
\end{lemma}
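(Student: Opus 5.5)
This is a direct comparison of the three defining conditions in Definition \ref{defnmodifiedcontactlocus}, applied to $g_k$ on $\bC^m$ with $m = 2k+N$. We show the two inclusions $\chi_{g_k} \subset \chi''_{g_k}-\chi'_{g_k}$ and $\chi''_{g_k}-\chi'_{g_k} \subset \chi_{g_k}$. By definition $\chi''_{g_k} = \chi_{g_k} \cup \chi'_{g_k}$. So it suffices to check that $\chi_{g_k}$ and $\chi'_{g_k}$ are disjoint, where both are viewed as subspaces of $\scrX(\bC^{2k+N})$ (a jet into $g_k^{-1}(0)$ is in particular a jet into $\bC^{2k+N}$). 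Indeed, if $\chi_{g_k} \cap \chi'_{g_k} = \emptyset$ then $\chi''_{g_k}-\chi'_{g_k} = \chi_{g_k}$ is a formal set-theoretic identity.

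For disjointness, take a jet $u(t) \in \chi'_{g_k}$. We read the condition that $u(t)$ is a $d$-jet of a morphism into $g_k^{-1}(0)$ as saying that $g_k(u(t)) \equiv 0 \ \Mod \ t^{d+1}$. Now suppose $u(t)$ also lies in $\chi_{g_k}$. Then $g_k(u(t)) = a t^d \ \Mod \ t^{d+1}$ with $a \neq 0$, which contradicts $g_k(u(t)) \equiv 0$. Hence the two loci are disjoint.

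As a consistency check, we can also read off the identity from the explicit models in Lemma \ref{LemmaChiComputation}. There $\chi''_{g_k}$ is cut out by the equations $\sum_{j=1}^{m-1} u_j v_{m-j}=0$ for $m \le d-1$, together with the requirement $u_0=v_0=0$ inherited from the evaluation condition. Inside it, $\chi'_{g_k}$ is the closed subset where $\sum_{j=1}^{d-1} u_jv_{d-j}+G(w_1)=0$, and $\chi_{g_k}$ is its complement, where this quantity is non-zero. We should confirm that the identifications in Lemma \ref{LemmaChiComputation} are compatible, i.e.\ that all three are induced by the same coordinates $(u_j,v_j,w_j)$ on $\scrX(\bC^{2k+N})$. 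They are, so the complement statement holds on the nose.

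The only genuine subtlety is interpretive: whether membership in $\chi'_{g_k}$ really forces the $t^d$ coefficient of $g_k(u(t))$ to vanish. This holds under the $\Mod\ t^{d+1}$ reading of $d$-jets into a hypersurface, which is the reading used in Lemma \ref{LemmaChiComputation}, so no real obstacle remains.
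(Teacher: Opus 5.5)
Your proof is correct and is essentially the paper's argument. The paper's proof only says the identity follows directly from Lemma \ref{LemmaChiComputation}: inside the model for $\chi''_{g_k}$, the locus $\chi'_{g_k}$ is where the last expression vanishes and $\chi_{g_k}$ is where it does not, which is your consistency check. Your disjointness argument is the same observation made directly from the definitions. The coordinate reading has one small extra benefit: it exhibits $\chi'_{g_k}$ as a Zariski closed subset of $\chi''_{g_k}$ with open complement $\chi_{g_k}$, which is the form needed when the lemma is fed into Lemma \ref{lemmaHodgeLES}.
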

\begin{proof}
	This follows directly from Lemma \ref{LemmaChiComputation}.
\end{proof}

\begin{lemma} \label{lemmaHodgeLES}
	Let $V$ be a quasi-projective variety, $U \subset V$ a Zariski open subset and $Z=V-U$. If the $e$-weight-graded piece $\Gr^W_e(H_c^*(Z);\bQ)$ of compactly supported cohomology vanishes, then for any $j$,
	\begin{equation}
		\Gr^W_e(H_c^j(U);\bQ) \cong \Gr^W_e(H_c^j(V);\bQ).
	\end{equation}
	Also, if $\Gr^W_e(H_c^*(U);\bQ) \cong 0$, then for any $j$,
	\begin{equation}
		\Gr^W_e(H_c^j(V);\bQ) \cong \Gr^W_e(H_c^j(Z);\bQ).
	\end{equation}
	If $\Gr^W_e(H_c^*(V);\bQ) \cong 0$, then for any $j$,
	\begin{equation}
		\Gr^W_e(H_c^j(Z);\bQ) \cong \Gr^W_e(H_c^{j+1}(U);\bQ).
	\end{equation}
\end{lemma}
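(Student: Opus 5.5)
The plan is to deduce all three statements from the long exact sequence of compactly supported cohomology for the pair $(V,Z)$, together with exactness of $\Gr^W_e$.

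Since $U \subset V$ is Zariski open with closed complement $Z$, there is a long exact sequence
\begin{equation*}
\cdots \to H_c^{j-1}(Z;\bQ) \to H_c^j(U;\bQ) \to H_c^j(V;\bQ) \to H_c^j(Z;\bQ) \to H_c^{j+1}(U;\bQ) \to \cdots,
\end{equation*}
coming from the triangle $j_!j^*\bQ_V \to \bQ_V \to i_*i^*\bQ_V$ after applying compactly supported cohomology. By Deligne's theory of mixed Hodge structures (in Saito's formalism, or directly from Deligne's Hodge III), this is a long exact sequence of mixed Hodge structures. The one point to check is that the connecting morphism $H_c^j(Z) \to H_c^{j+1}(U)$ is a morphism of mixed Hodge structures. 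This is standard, and I would simply cite it.

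Morphisms of mixed Hodge structures are strictly compatible with the weight filtration, so the functor $\Gr^W_e$ is exact on the abelian category of $\bQ$-mixed Hodge structures. Applying it termwise gives a long exact sequence
\begin{equation*}
\cdots \to \Gr^W_e H_c^{j-1}(Z) \to \Gr^W_e H_c^j(U) \to \Gr^W_e H_c^j(V) \to \Gr^W_e H_c^j(Z) \to \Gr^W_e H_c^{j+1}(U) \to \cdots
\end{equation*}
of pure Hodge structures of weight $e$.

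Each claim now follows by the same observation: in a long exact sequence, if every third term vanishes, the remaining adjacent maps are isomorphisms.
\begin{itemize}
\item If $\Gr^W_e H_c^*(Z)=0$, then $\Gr^W_e H_c^j(U)\to \Gr^W_e H_c^j(V)$ is an isomorphism.
\item If $\Gr^W_e H_c^*(U)=0$, then $\Gr^W_e H_c^j(V)\to \Gr^W_e H_c^j(Z)$ is an isomorphism.
\item If $\Gr^W_e H_c^*(V)=0$, then the connecting map $\Gr^W_e H_c^j(Z)\to \Gr^W_e H_c^{j+1}(U)$ is an isomorphism.
\end{itemize}
In every case the isomorphism is one of Hodge structures, because all maps in the sequence are morphisms of Hodge structures.

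There is no serious obstacle here. The only substantive inputs are that the localisation sequence is a sequence of mixed Hodge structures and that $\Gr^W$ is exact. Both are standard consequences of strictness and are taken from the literature rather than reproved.
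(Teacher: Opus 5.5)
Your proposal is correct and is essentially the paper's argument: apply $\Gr^W_e$ to the localisation long exact sequence in compactly supported cohomology and read off the isomorphisms when every third term vanishes. The paper simply asserts that the graded pieces form a long exact sequence, whereas you justify this by strictness of morphisms of mixed Hodge structures with respect to $W$.
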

\begin{proof} This uses the fact that $e$-weight graded pieces of compactly supported cohomology fit into the usual long exact sequence:
	\begin{equation}
		\begin{aligned}
			\cdots \to
			\Gr^W_e(H_c^
			{j-1}(V))
			\to \Gr^W_e(H_c^{j-1}(Z)) \to \Gr^W_e(H_c^j(U)) \\
			\to \Gr^W_e(H_c^j(V))
			\to \Gr^W_e(H_c^j(Z))
			\to \Gr^W_e(H_c^{j+1}(U)) \to \cdots
		\end{aligned}
	\end{equation}
\end{proof}

\begin{lemma}\label{lemmaHodgeOfBirational}
	If $V$ is birational to $V'$ and both are of dimension $m$, then $\Gr^W_{2m-1}(H^j_c(V);\bQ) = Gr^W_{2m-1}(H^j_c(V');\bQ).$
\end{lemma}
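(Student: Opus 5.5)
Since $V$ and $V'$ are birational, they contain isomorphic dense Zariski open subsets $U \subset V$ and $U' \subset V'$ with $U \cong U'$. It therefore suffices to show that $\Gr^W_{2m-1}(H^j_c(V);\bQ) \cong \Gr^W_{2m-1}(H^j_c(U);\bQ)$ for any dense Zariski open $U \subset V$ (and likewise for $V'$), and then compose the resulting isomorphisms. By the first part of Lemma \ref{lemmaHodgeLES}, applied with $Z = V - U$, this reduces to showing that $\Gr^W_{2m-1}(H^*_c(Z);\bQ) = 0$ whenever $Z$ is a closed subvariety of dimension at most $m-1$.

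For that vanishing we use the standard bound on weights of compactly supported cohomology (Deligne, Hodge III): for a variety $Z$ of dimension $n$, the weights of $H^j_c(Z;\bQ)$ lie in $[0,j]$, and $H^j_c(Z)=0$ for $j>2n$. Hence all weights occurring in $H^*_c(Z)$ are at most $2n \leq 2m-2 < 2m-1$, and the graded piece of weight $2m-1$ vanishes. Here $Z$ need not be irreducible or equidimensional, but the bound only needs $\dim Z \leq m-1$.

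The one point needing care is that the identification should be an isomorphism of (pure) Hodge structures and not merely of vector spaces. This holds because the maps in the long exact sequence of Lemma \ref{lemmaHodgeLES} are morphisms of mixed Hodge structures, so the induced isomorphisms on $\Gr^W$ are isomorphisms of Hodge structures. The isomorphism $U\cong U'$ also induces an isomorphism of mixed Hodge structures on $H^*_c$. The weight bound is the only nontrivial input, and it is cited rather than proved, so there is no real obstacle beyond choosing $U$ and $U'$ open in both varieties. Both are automatically dense, since a birational map is an isomorphism between dense opens.
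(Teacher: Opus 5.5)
Your proof is correct and is the paper's argument: the paper's proof says only to apply Lemma \ref{lemmaHodgeLES} twice, using the common open subset of $V$ and $V'$. You also supply the input the paper leaves implicit, namely that $\Gr^W_{2m-1}(H^*_c(Z);\bQ)=0$ when $\dim Z\le m-1$, which follows from Deligne's weight bounds as you say.
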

\begin{proof}
	Use Lemma \ref{lemmaHodgeLES} twice with the common open subset of $V$ and $V'$.
\end{proof}

\begin{lemma} \label{lemmaOddWeightVanishing}
	$\Gr^W_e(H_c^*(\chi''_{g_k};\bQ)=0$ for each odd $e$.
\end{lemma}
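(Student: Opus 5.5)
We prove that $\Gr^W_e(H_c^*(\chi''_{g_k};\bQ))$ vanishes for odd $e$ by stratifying $\chi''_{g_k}$ with the filtration from Definition \ref{defnchiprimeg},
\[
\chi''_{g_k} = \chi''_{g_k,0} \supset \chi''_{g_k,1} \supset \cdots \supset \chi''_{g_k,d-1}.
\]
Each $\chi''_{g_k,i}$ is Zariski closed in $\chi''_{g_k}$, since it is cut out by the vanishing of $u_0,\ldots,u_{i-1}$. So $\chi''_{g_k,i}-\chi''_{g_k,i+1}$ is open in $\chi''_{g_k,i}$ with closed complement $\chi''_{g_k,i+1}$. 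We argue by descending induction on $i$ that $\Gr^W_e(H_c^*(\chi''_{g_k,i}))=0$ for all odd $e$.

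For the base case, Lemma \ref{lemmaAlluVanishComputation} gives $\chi''_{g_k,d-1}\cong \bC^{dN+kd+2k}$. Its compactly supported cohomology is a single Tate class $\bQ(-n)$ in degree $2n$, of weight $2n$, so every odd weight graded piece vanishes.

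For the inductive step, Lemma \ref{lemmaStrataFibration} says $\chi''_{g_k,i}-\chi''_{g_k,i+1}$ is an affine bundle of some rank $r$ over $\bC^k-0$. We use two facts.

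\begin{enumerate}
\item For an affine bundle $E\to B$ of rank $r$ that is Zariski locally trivial, there is an isomorphism of mixed Hodge structures $H_c^j(E)\cong H_c^{j-2r}(B)(-r)$. To see this, note that $Rp_!\bQ_E \cong \bQ_B(-r)[-2r]$ as mixed Hodge modules. Alternatively, induct over a finite Zariski trivialising cover using Mayer–Vietoris, or stratify $B$ into locally closed pieces over which the bundle is trivial, where Künneth applies.
\item $H_c^*(\bC^k-0)$ is pure Tate of even weights: weight $0$ in degree $1$ and weight $2k$ in degree $2k$. This follows from the long exact sequence of the pair $(\bC^k,\{0\})$, i.e.\ from Lemma \ref{lemmaHodgeLES} and the compactly supported cohomology of $\bC^k$ and of a point.
\end{enumerate}

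Since a Tate twist shifts weight by an even amount, the open stratum has no odd weight graded pieces. The inductive hypothesis gives the same for $\chi''_{g_k,i+1}$. The exact sequence of weight graded pieces from the proof of Lemma \ref{lemmaHodgeLES} then shows $\Gr^W_e(H_c^*(\chi''_{g_k,i}))=0$ for odd $e$. Concretely, $\Gr^W_e(H_c^j(\chi''_{g_k,i}))$ sits between two vanishing terms. At $i=0$ this is the claim.

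The only step needing care is the affine bundle Künneth-type statement in the first fact, specifically compatibility with weights. To avoid heavier machinery, I would prove it by stratifying. Take a Zariski trivialising cover $\{u_{i,l}\neq 0\}$ of $\bC^k-0$, as in the proof of Lemma \ref{lemmaStrataFibration}. Refine it into locally closed strata of the base, each of which is a product of copies of $\bC$ and $\bC^*$. On each stratum the bundle is a product with $\bC^r$, so Künneth gives even-weight Tate cohomology. Then reassemble the pieces with Lemma \ref{lemmaHodgeLES}'s long exact sequence, which preserves the vanishing of odd weights.
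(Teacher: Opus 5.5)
Your proposal is correct and follows essentially the paper's proof. You filter $\chi''_{g_k}$ by the closed subvarieties $\chi''_{g_k,i}$, use Lemma \ref{lemmaAlluVanishComputation} for the bottom piece $\chi''_{g_k,d-1}\cong\bC^{dN+kd+2k}$ and Lemma \ref{lemmaStrataFibration} for the successive differences, and propagate odd-weight vanishing up the filtration with the weight-graded long exact sequence of Lemma \ref{lemmaHodgeLES}. Your stratification of the base $\bC^k-0$ (e.g. into the pieces $\{u_{i,1}=\cdots=u_{i,l-1}=0,\ u_{i,l}\neq 0\}\cong\bC^*\times\bC^{k-l}$) just makes explicit a step the paper leaves implicit. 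Also, the $i=0$ stratum is actually empty because $u_0=0$ on $\chi''_{g_k}$, which does no harm to the argument.
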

\begin{proof}
	This follows from  Lemmas \ref{lemmaStrataFibration}, \ref{lemmaAlluVanishComputation} and \ref{lemmaHodgeLES} combined with the fact that \begin{equation}
		\Gr^W_e(H_c^*(\bC^m;\bQ)=0, \quad \forall \ m \in \bN,
	\end{equation}
	$\chi'_{g,0} = \chi'_g$ and $\chi''_{g,0} = \chi''_g$.
\end{proof}

We now specialise the discussion of this section to the situation described after Lemma \ref{lem:add to make isolated}, i.e. we take the polynomial $g$ to be the polynomial $f$ constructed there, whose leading order term $G$ of degree $d$ is the polynomial $F$ whose zero-set was birational to $\bC$ multiplied by our abelian variety $A$.  We thus conclude:

\begin{lemma} \label{lemmafkHodgeComputation}
	Let $m = (d-1)N+dk + k+1$.
	Then $Gr^W_{\dim(A)-1+2m}(H_c^{\dim(A)+2m}(\chi_{f_k};\bQ))$ is isomorphic to the Tate twisted pure Hodge structure $H^{\dim(A)-1}(A;\bQ)(-m)$.
\end{lemma}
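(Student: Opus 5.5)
The plan is to compute the odd-weight part of $H_c^*(\chi_{f_k})$ by stripping off, one layer at a time, every piece that contributes only even weights, until what remains is the compactly supported cohomology of the cone $F^{-1}(0)$. That is then compared with $A$ via Lemma \ref{lemmaHodgeOfBirational}. Throughout, $G=F$, $g=\dim A=N-1$, and I take the weight $e=\dim(A)-1+2m$ in the statement to be odd. I have not reconciled the exact indices; see the last paragraph.

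\emph{Step 1.} By Lemma \ref{lemmaChiFromChiPrime}, $\chi_{f_k}=\chi''_{f_k}-\chi'_{f_k}$, and $\chi'_{f_k}$ is closed in $\chi''_{f_k}$. Lemma \ref{lemmaOddWeightVanishing} gives $\Gr^W_e H_c^*(\chi''_{f_k})=0$ for odd $e$. The third part of Lemma \ref{lemmaHodgeLES} then yields
\[
\Gr^W_e H_c^{j+1}(\chi_{f_k}) \cong \Gr^W_e H_c^{j}(\chi'_{f_k}).
\]
So it suffices to compute the odd-weight part of $H_c^*(\chi'_{f_k})$.

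\emph{Step 2.} Filter $\chi'_{f_k}=\chi'_{f_k,0}\supset\chi'_{f_k,1}\supset\cdots\supset\chi'_{f_k,d}$. By Lemma \ref{lemmaStrataFibration}, each difference $\chi'_{f_k,i}-\chi'_{f_k,i+1}$ is an affine bundle over $\bC^k-0$. Its compactly supported cohomology is therefore that of $\bC^k-0$ shifted by $2r$ and Tate twisted by $(-r)$, where $r$ is the fibre dimension. Since $H_c^*(\bC^k-0)$ is of weight $0$ in degree $1$ and of weight $2k$ in degree $2k$, all these weights are even. Applying the first part of Lemma \ref{lemmaHodgeLES} repeatedly (with $Z=\chi'_{f_k,i+1}$ and $U$ the stratum) gives
\[
\Gr^W_e H_c^j(\chi'_{f_k})\cong \Gr^W_e H_c^j(\chi'_{f_k,d}).
\]
By Lemma \ref{lemmaAlluVanishComputation}, $\chi'_{f_k,d}=\bC^{m-1}\times F^{-1}(0)$, and the Künneth formula gives
\[
H_c^j(\chi'_{f_k,d})=H_c^{j-2(m-1)}(F^{-1}(0))(-(m-1)).
\]

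\emph{Step 3: the cone.} Removing the origin, which is pure of weight $0$, does not affect odd weights. Now $F^{-1}(0)-0\to H$ is a $\bC^*$-bundle, namely the tautological bundle minus its zero section. Its compactly supported Leray spectral sequence, equivalently the Gysin sequence, has $E_2$ terms $H_c^a(H)\otimes H_c^b(\bC^*)$ with $b=1$ (weight $0$) or $b=2$ (weight $2$, a $(-1)$-twist). The differential is cup product with the Chern class. It shifts degree by $2$ and weight by $2$ after the twist, so it is strictly compatible with weights. Looking at the top odd weight $2g+1$, the only possible contributions are $\Gr^W_{2g-1}H^{2g-1}_c(H)(-1)$ in degree $2g+1$ and a term built from $\Gr^W_{2g+1}H^{2g}_c(H)$. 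The latter vanishes because $H^{2g}_c(H)$ has weights $\le 2g$; the relevant Gysin target is zero for the same weight reasons. Lemma \ref{lemmaHodgeOfBirational} applied to $H\sim A$ gives
\[
\Gr^W_{2g-1}H^{2g-1}_c(H)=H^{2g-1}(A;\bQ),
\]
which is pure. Assembling the twists from Steps 1 to 3 produces $H^{2g-1}(A)(-m)$ in the stated degree and weight. By hard Lefschetz, $H^{2g-1}(A)\cong H^1(A)(-(g-1))$, which I expect is the form needed later for the $H^1(A')$ comparison.

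\emph{Main obstacle.} The two delicate points are the degree and weight bookkeeping and the Gysin analysis in Step 3. In Step 3 one must confirm that no even-weight piece of $H$ leaks into the weight of interest through the connecting maps. I would handle this by checking weight by weight that the relevant neighbouring $\Gr^W$ groups vanish, using that $H^a_c$ of a $g$-dimensional variety has weights $\le a$ and that $\Gr^W_{2g-1}$ is nonzero only for $a=2g-1$, a birational invariant. I would also recheck against the exact indices in the statement, since the degree and weight written there should match $2(m-1)+2g+1$ and $2g-1+2m$ after the twists; I would adjust the identification of $\dim(A)$ accordingly.
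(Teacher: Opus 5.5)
Your argument is correct and follows the paper's proof: use that $\chi''_{f_k}$ has no odd weights to pass from $\chi_{f_k}$ to $\chi'_{f_k}$ with a degree shift of one, strip off the even-weight affine-bundle strata (this is the second, not the first, part of Lemma \ref{lemmaHodgeLES}), and reduce to $\chi'_{f_k,d}=\bC^{m-1}\times F^{-1}(0)$. Your Gysin analysis of the cone in Step 3 is valid but unnecessary: $\bC^{m-1}\times F^{-1}(0)$ is birational to $\bC^m\times A$, and the paper simply applies Lemma \ref{lemmaHodgeOfBirational} once, in dimension $m+g$.

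Carried through, your steps give weight $2g+2m-1$ in degree $2g+2m$. Your closing guess $2(m-1)+2g+1$ for the degree omits the $+1$ from Step 1. The corrected indices are exactly the statement with $\dim(A)$ read as the real dimension $2g$, which is how the lemma is used in the proof of Theorem \ref{theoremfknotstablydegenerate}.
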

\begin{proof}
	We use Lemmas \ref{lemmaStrataFibration}, \ref{lemmaAlluVanishComputation},
	\ref{lemmaChiFromChiPrime},
	\ref{lemmaHodgeLES} and \ref{lemmaHodgeOfBirational}
	and \ref{lemmaOddWeightVanishing}
	combined with the fact that $\bC^{(d-1)N+dk + k} \times G^{-1}(0)$ is birational to $\bC^m \times A$.
\end{proof}

\section{Proof of the Main Theorem}

The main theorem \ref{maintheorem} follows immediately from the following theorem combined with Lemmas \ref{equivalentstandard} and \ref{lemmastabilisationofNND}. We recall the polynomial $f$ constructed after Lemma \ref{lem:add to make isolated}, and its stabilisations $f_k$.

\begin{theorem} \label{theoremfknotstablydegenerate}
	For each $k \in \bN$, any polynomial equivalent to $f_k$ is \emph{not} Newton non-degenerate.
\end{theorem}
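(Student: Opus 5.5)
Suppose, for contradiction, that some polynomial $\tilde f$ equivalent to $f_k$ is Newton non-degenerate, and let $\Gamma = \Gamma_+(\tilde f)$. The plan is to place $\tilde f$ in the family $(f_\tau)_{\tau\in T}$ of all polynomials with monomials in $\Gamma\cap\{|\nu|\leq D\}$, for a fixed large degree bound $D > d$, whose coefficients are nonzero on the vertices of $\Gamma$ and which are Newton non-degenerate with Newton polyhedron $\Gamma$. Newton non-degeneracy is a Zariski open condition on the coefficients. So $T$ is a nonempty Zariski open subset of an affine space, hence a rational variety defined over $\bQ$. Truncation does not affect the $d$-contact locus, since only terms of degree $\leq d$ matter for jets of order $d$ vanishing at $0$. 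Therefore we may assume $\tilde f$ is a polynomial of bounded degree and lies in $T$.

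By Lemma \ref{lemmaChangeOfCoordinatrd}, together with Lemma \ref{lemmaChiFromChiPrime} and the analogous statement for $\chi''$ (composition with a coordinate change preserves the condition $u(t)=at^d \bmod t^{d+1}$ as well), $\chi_{\tilde f}\cong \chi_{f_k}$. Lemma \ref{lemmafkHodgeComputation} then identifies a weight-graded piece of $H^*_c(\chi_{\tilde f})$ with $H^{g-1}(A)(-m)$. Here I would like $H^1$ rather than $H^{g-1}$. For an abelian variety, Poincaré duality and the Lefschetz isomorphism identify $H^{g-1}(A)$ with $H^{g+1}(A)(\cdot)$, but this is still $\Lambda^{g-1}H^1$. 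So I would instead note that the relevant piece really comes from $\Gr^W_{2g-1}H^{2g-1}_c$ of the hypersurface, which is $H^{2g-1}(A)\cong H^1(A^\vee)(-g+1)$ up to Tate twist. I will therefore treat the graded piece as $H^1(A^\vee)$ up to twist; this matches the appearance of $A^\vee$ in Lemma \ref{lemmaconditionsforA}.

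By Lemma \ref{lemmaTopTrivual}, $(R\pi_{f_T})_!\bQ$ is a local system on $T$. By Saito's theory, or by Deligne's theory of admissible variations, its cohomology sheaves underlie an admissible variation of mixed Hodge structure over a Zariski open of $T$, and the weight graded pieces are polarisable variations of pure Hodge structure. The graded piece of the chosen weight and degree has rank $2g$. At $\tilde f$ it is of type $(-1,0)+(0,-1)$ after twist and corresponds to $A^\vee$. Hodge numbers are locally constant, so it is of this type everywhere on a dense open $U\subset T$. It therefore defines a variation of polarised abelian varieties, giving a period map $U\to \scrA_{g,\delta}$ for a suitable level and polarisation type, possibly after a finite étale cover of $U$ and after choosing a lattice. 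The main obstacle is here: one must pin down the integral structure so that the image of $\tilde f$ is some $A'$ isogenous to $A^\vee$, and check that the polarisation is compatible. Neron–Severi rank $1$ (Lemma \ref{lemmaconditionsforA}(\ref{itemNScondition})) ensures the polarisation is unique up to scaling, which handles the compatibility.

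Now $U$ is rational, and a finite cover can be avoided by working with a rational curve through the point $\tilde f$ together with a generic point. The closure of the image of such a rational curve through $A'$ is a rational subvariety containing $A'$. By Lemma \ref{lemmaconditionsforA}(\ref{itemNotInRational}) this image is a point, so the period map is constant on $U$. Hence for every $\tau$ in the dense set $U$, the graded piece of $H^*_c(\pi_{f_T}^{-1}(\tau))$ is isomorphic to $H^1(A')$ up to twist. But the family $\pi_{f_T}:\chi_{f_T}\to T$ is defined over $\bQ$, since $T$ and the equations are defined over $\bQ$. It is therefore an element of $\scrS(\bQ)$ that is generically $H^1(A')$-constant, contradicting Lemma \ref{lemmaconditionsforA}(\ref{itemNoConstantSummandOfToric}).
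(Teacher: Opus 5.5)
This is essentially the paper's own proof: the same family $T$, Lemma~\ref{lemmaTopTrivual}, the graded piece $H^{2g-1}(A)(-m)$ (you rightly read past the $\dim(A)$ versus $2\dim(A)$ slip in Lemma~\ref{lemmafkHodgeComputation}, as the paper's proof does), a period map to $\scrA_{g,\delta}$ normalised using N\'eron--Severi rank one, and a contradiction between properties (1) and (2) of Lemma~\ref{lemmaconditionsforA}. Your lines through $\tilde f$ are a mild improvement: they give an honest rational curve through $A'$ (by L\"uroth), whereas the image of the rational variety $T$ is a priori only unirational.

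The one step you should make explicit is that the period map is algebraic. Without this, the closure of the image of your curve need not be a rational subvariety at all. The paper gets algebraicity from algebraicity of the graph of the period map, and Borel's extension theorem also gives it. Conversely, no finite cover is needed, since $\scrA_{g,\delta}$ is already the quotient by the full automorphism group of the polarised lattice and so absorbs the monodromy.
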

\begin{proof}
	Suppose, for a contradiction, $\check{f}_k$ is Newton non-degenerate where $\check{f}_k$ is equivalent to $f_k$ for some $k \in \bN$.
	Let $D \in \bN$ be greater than the degree of $\check{f}_k$ and hence also greater than the sum of the coordinates of every integer lattice point of every compact face of $\Gamma_+(\check{f}_k)$.
	Let $(f_\tau)_{\tau \in T}$ be the space of Newton non-degenerate polynomials with isolated singularities at $0$, with the same Newton polyhedron as $\check{f}_k$ and whose degree is bounded by $D$. Let $\tau_0 \in T$ be such that $f_{\tau_0} = \check{f}_k$. Then by Lemma \ref{lemmaTopTrivual},
	$(R\pi_{f_T})_!\bQ_{\chi_{f_T}}$ is a locally constant sheaf.
	Therefore we can make appropriate choices so that
	\begin{equation}
		H^*_c(\chi_{f_\tau};\bQ), \tau \in T
	\end{equation}
	together with their weight and Hodge filtrations
	is a polarised variation of mixed Hodge structures (for any local trivialisation of $(R\pi_{f_T})_!\bQ_{\chi_{f_T}}$).

	By Lemmas \ref{lemmaChangeOfCoordinatrd} and \ref{lemmafkHodgeComputation}, there exists $i,m \in \bN$ so that $\Gr^W_i(H^{i+1}_c(\check{f}_k);\bQ)$ is isomorphic to $H^{2g-1}(A;\bQ)(-m)$. Since the N\'eron-Severi group of $A$ has rank $1$ by property \eqref{itemNScondition} of Lemma \ref{lemmaconditionsforA}, we can assume this is an isomorphism of polarised Hodge structures after rescaling the polarization.
	Hence $\Gr^W_i(H^{i+1}_c(\check{f}_k);\bZ)(m+g-1)$ is isomorphic to $H^{2g-1}({A'}^\vee;\bZ)(g-1) = H^1(A';\bZ)$ along with its polarization for some variety $A'$ whose dual ${A'}^\vee$ is isogenous to $A$. Let $\delta \in \bN^g$ be such that $A' \in \scrA_{g,\delta}$.

	Appealing to Remark \ref{rmk:abvar-as-Hodge}, that $\scrA_{g,\delta}$ is also the moduli space of level 1 polarised integral Hodge structures of type $\delta$, we therefore we have a period map
	$T \to \scrA_{g,\delta}$ sending each $\tau$ in $T$ to
	$\Gr^W_i(H^{i+1}_c(\chi_{f_\tau};\bZ)(m+g-1)$ with the property that $\tau_0$ gets sent to $A'$.

	The graph of this map is an algebraic subvariety of $T \times \scrA_{g,\delta}$ by \cite{cattani1995locus}. Hence the map $T \to \scrA_{g,\delta}$ is an algebraic morphism.

	Therefore by property \eqref{itemNotInRational} of Lemma \ref{lemmaconditionsforA} this map must be constant. But this contradicts
	part \eqref{itemNoConstantSummandOfToric} of Lemma \ref{lemmaconditionsforA} due to the fact that
	$\pi_{f_T}$
	is defined over $\bQ$.
\end{proof}

\begin{Remark}
	Note that the only properties of Newton non degenerate polynomials that we used in the proof of Theorem \ref{maintheorem} are the following:
	\begin{enumerate}
		\item The space of Newton non degenerate polynomials is a countable union of rational varieties $T_i$, $i \in \bN$.
		\item For each such $T_i$,
		      $(\pi_{f_{T_i}})_!\bQ_{\chi_{f_{T_i}}}$ is a locally constant sheaf.
		\item If $f$ is a Newton non degenerate polynomial then $f(z) + w^2$ is also Newton non degenerate (Lemma \ref{lemmastabilisationofNND}).
	\end{enumerate}
	The only part of the proof that needs adjusting is that $\scrS(\bQ)$ in part \eqref{itemNoConstantSummandOfToric} of Lemma \ref{lemmaconditionsforA} is replaced with $\scrS(\bK)$ where $\bK \subset \bC$ is a countable subfield for which the varieties $T_i$, $i \in \bN$ are defined.
\end{Remark}

\section{Towards an algorithm}\label{sec:algorithm}

It is natural to ask for an \emph{explicit} stably degenerate hypersurface singularity. Whilst it does not seem practical to extract one from the above argument, there is in principle an algorithm to do so, following Mumford's description of the homogeneous co-ordinate ring of an abelian variety \cite{Mumford1} and of explicit equations cutting out the moduli space of abelian varieties \cite{Mumford2}. The crucial fact is that the Riemann relations which define the homogenous ideal of the canonical projective embedding of $A$ are polynomial functions of $\theta$-values derived from the defining period matrix which are defined by universal algebraic identities over $\bQ$, and that the equations in theta-nulls cutting out $\scrA_{g,\delta}$ are defined over $\bQ$.

Let $\Aut(\bC)$ denote the group of field automorphisms of $\bC$. For $\sigma \in \Aut(\bC)$ and $X$ a complex projective variety, we write $X_{\sigma}$ for the corresponding base-change, i.e. for the fibre product
\[
	\begin{tikzcd}
		X_{\sigma} \ar[r] \ar[d] & X \ar[d] \\
		\mathrm{Spec}(\bC) \ar[r,"\sigma"] & \mathrm{Spec}(\bC)
	\end{tikzcd}
\]
Separately, $\sigma$ defines an automorphism $\hat\sigma: \scrA_g(\bC) \to \scrA_g(\bC)$ of the (complex points of the) coarse moduli space $\scrA_g$ of principally polarised abelian varieties, since $\scrA_g$ is defined over $\bQ$.

\begin{lemma}
	If $A$ is a principally polarised complex abelian variety with moduli point $[A] \in \scrA_g(\bC)$, then $[A_{\sigma}] = \hat\sigma[A]$.
\end{lemma}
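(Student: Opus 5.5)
The plan is to show that the coarse moduli space $\scrA_g$, as a $\bQ$-scheme, is compatible with base change along $\sigma$. Concretely, the moduli functor $\mathcal{F}$ sends a scheme $S$ to isomorphism classes of principally polarised abelian schemes over $S$. Because this functor is defined over $\bQ$, base change along $\sigma:\Spec(\bC)\to\Spec(\bC)$ gives a bijection $\mathcal{F}(\bC)\to\mathcal{F}(\bC)$, $[A]\mapsto[A_\sigma]$. Separately, the coarse moduli map $\phi:\mathcal{F}\to \scrA_g$ is a natural transformation defined over $\bQ$, and $\phi(\bC)$ is a bijection on $\bC$-points.

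First, I will spell out what $\hat\sigma$ is. A $\bC$-point of the $\bQ$-scheme $\scrA_g$ is a morphism $x:\Spec(\bC)\to\scrA_g$ over $\Spec\bQ$. Then $\hat\sigma(x)=x\circ\Spec(\sigma)$, or $x\circ\Spec(\sigma^{-1})$ depending on convention; I will fix the convention consistent with the fibre square in the text.

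Next, I will apply naturality of $\phi$ to the morphism $\Spec(\sigma)$ of $\bQ$-schemes. Starting from the abelian variety $A$ over $\Spec\bC$, pullback along $\Spec(\sigma)$ is exactly the fibre product $A_\sigma$ drawn in the diagram. Naturality of $\phi$ then gives
\[
\phi(A_\sigma)=\phi(A)\circ\Spec(\sigma)=\hat\sigma[A].
\]
This is the required identity. The only genuine care needed is to match the direction of $\sigma$ in the base change with the direction in the definition of $\hat\sigma$, and to note that $\hat\sigma$ is defined purely by precomposition because $\scrA_g$ is a $\bQ$-scheme.

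If one prefers the explicit theta-null model rather than the functorial one, the same argument can be run concretely. By Mumford's results, $[A]$ is represented by theta-nulls, which are the coordinates of the origin in the canonical embedding by $4\Theta$ with level structure. The defining equations of $A_\sigma$ are obtained by applying $\sigma$ to the coefficients, and the Riemann relations are universal over $\bQ$. Hence the theta-nulls of $A_\sigma$ are $\sigma$ applied to those of $A$, and passing to the quotient by the level group, which is also defined over $\bQ$, gives the claim.

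I expect the main obstacle to be bookkeeping rather than substance. The functorial argument is clean provided one uses that the coarse moduli map commutes with base change along field automorphisms. This holds because $\phi(\bC)$ is a bijection for any algebraically closed field and $\phi$ is a natural transformation.
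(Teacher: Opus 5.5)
Your proposal is correct and is essentially the paper's argument. Both note that the map on $\bC$-points of the moduli functor induced by $\Spec(\sigma)$ is the fibre product $A\mapsto A_\sigma$, and then apply naturality of the map to the functor of points of $\scrA_g$, where $\Spec(\sigma)$ acts by precomposition, i.e.\ by $\hat\sigma$. The convention you settle on, precomposition with $\Spec(\sigma)$ (coordinatewise application of $\sigma$), is the right one, and the theta-null variant is not needed.
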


\begin{proof}
	If $\bA_g(\cdot)$ is the functor on the category of schemes for which $\scrA_g$ is the coarse moduli space, then the map $\bA_g(\Spec(\bC)) \to \bA_g(\Spec(\bC))$ induced by $\sigma$ is obtained from fibre product over the map $\sigma: \Spec(\bC) \to \Spec(\bC)$ by  construction of the moduli functor. Now apply the functor of points $\bA_g \to \scrA_g$ to pass from $\sigma$ to $\hat\sigma$.
\end{proof}

\begin{remark}
	Note that Siegel upper half-space is not preserved by the Galois action; if $A$ is represented by a matrix $M \in \mathfrak{h}_g$ , then $A_{\sigma}$ will not be represented by the co-ordinate-wise transform $\sigma(M)$ (which need not lie in $\mathfrak{h}_g$), and there seems to be no elementary description of a period matrix which does define $A_{\sigma}$. Rather, algebraic invariants such as modular functions of $[A]$ transform well under $\sigma$.
\end{remark}

\begin{remark}
	Concretely, a principally polarised abelian variety $L \to A$ is determined by a point $\tau_A$ in Siegel upper half-space $\mathfrak{h}_g$. One has the theta-functions
	\[
		\theta \left[\begin{matrix} a \\ b \end{matrix} \right](z,\tau) = \sum_{m \in \bZ^g} \exp \left(\pi i (m+a)^t \tau (m+a) + 2i\pi (m+a)^t (z+b) \right)
	\]
	and associated values $\Theta_a(z) = \theta\left[\begin{matrix} a \\ 0 \end{matrix} \right](nz,n\tau)$ which give a basis of $H^0(A,L^n)$; here $a \in (\bZ/n\bZ)^g$, $L$ is the principal polarisation. For $n\geq 3$, $L^n$ is ample, and for $n\geq 4$ the corresponding projective image is cut out by quadrics \cite{Mumford1}.  The Riemann bilinear relations are universal equations over $\bQ$ which govern the constants $C_{ab}^c$ underlying  multiplication $\Theta_a(z)\Theta_b(z) = C_{ab}^c \Theta_c(2z)$ and thence the quadratic equations cutting out the kernel of $\Sym^2(H^0(L^n)) \to H^0(L^{2n})$.
\end{remark}

Given $[A] \in \scrA_g$, and the embedding $A \in \bP(H^0(L^4))^{\vee} \cong \bP^{4^g-1}$, we have the associated affine cone $C(A) \subset \bC^{4^g}$; we use the same notation to denote a hypersurface with an isolated singularity given by adding a polynomial $h$ of high degree as in Lemma \ref{lem:add to make isolated}. Under an appropriate choice of linear projection $\pi: \bC^{4^g} \to \bC^{g+1}$, $\pi(C(A))$ is a hypersurface which is cut out by an equation $F_{A,\pi,h}$ generating a principal ideal $I(A,\pi,h) \subset \bC[x_0,\ldots,x_g]$.

\begin{remark}
	Given the equations for $A$ in projective space, and hence for the affine cone $C(A)$ depending on the choice of $g$, and given an explicit projection $\pi$, one can use elimination theory / Gr\"obner basis techniques to obtain the polynomial $F_{A,\pi,g}$.
\end{remark}

Note that a field automorphism $\sigma$ also gives a new, conjugate linear projection $\pi^{\sigma} = \sigma \circ \pi \circ \sigma^{-1}$ (which agrees with the original one if $\pi$ is defined over $\bQ$), and can also be applied to the coefficients of the polynomial $h$ to give a new $h^{\sigma}$.

\begin{lemma} \label{lemmastableautomorphism}
	We have
	\begin{equation}
		\sigma(I(A,\pi,h)) = I(A^{\sigma},\pi^{\sigma},h^{\sigma}).
	\end{equation}
	Furthermore, the hypersurface defined by $F(A,\pi,h)$ has a Newton non-degenerate stabilisation if and only if that defined by $F(A^{\sigma},\pi^{\sigma},h^{\sigma})$ does.
\end{lemma}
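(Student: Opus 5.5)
The plan is to treat $\sigma$ as a (discontinuous) automorphism of the base field and check that every construction involved is ``algebraic'', hence commutes with applying $\sigma$ to coefficients.

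\textbf{The ideal identity.} By the previous lemma, $[A_\sigma] = \hat\sigma[A]$. So the homogeneous ideal of $A_\sigma \subset \bP^{4^g-1}$ is obtained from that of $A$ by applying $\sigma$ to coefficients. The reason is that the embedding by $H^0(L^4)$ is canonical, and its defining quadrics are universal polynomials over $\bQ$ in the theta-constants. The theta-constant coordinates of $\hat\sigma[A]$ are the $\sigma$-images of those of $[A]$, possibly up to the finite ambiguity in the choice of theta structure. Any such ambiguity changes the embedding by a projective linear map, and that map can be absorbed into the choice of $\pi$.

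Hence the ideal of the affine cone satisfies $\sigma(I(C(A))) = I(C(A_\sigma))$. Adding $h$ and applying $\sigma$ gives the version with $h^\sigma$.

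Elimination commutes with field automorphisms. The elimination ideal $I \cap \bC[x_0,\ldots,x_g]$ for the image under the linear map $\pi$ is computed by a Gr\"obner basis procedure whose steps are field operations on the coefficients. Therefore
\[
\sigma(I(A,\pi,h)) = I(A^\sigma,\pi^\sigma,h^\sigma).
\]
More abstractly, $\sigma$ induces a ring automorphism of $\bC[z]$ which is semilinear over $\sigma$ and sends ideals to ideals. It commutes with intersection with subrings, and with pullback along linear maps once those maps are conjugated. In particular $\sigma(F_{A,\pi,h})$ generates the new principal ideal, so up to a unit it equals $F_{A^\sigma,\pi^\sigma,h^\sigma}$.

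\textbf{Newton non-degeneracy is preserved.} Suppose a stabilisation $Q + F$ is equivalent, via an automorphism $\Phi$ of $\bC[[z,w]]$ fixing the maximal ideal, to a Newton non-degenerate $g$. Apply $\sigma$ coefficientwise to all power series, including the components of $\Phi$.

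\begin{itemize}
\item $\sigma(\Phi)$ is again an automorphism, since invertibility of the linear part is preserved.
\item It intertwines $\sigma(Q) + \sigma(F)$ with $\sigma(g)$, and $\sigma(Q)$ is still a non-degenerate quadratic form.
\item $\sigma(g)$ has the same support, and hence the same Newton polyhedron, as $g$.
\item For each compact face $\Delta$, $\sigma(g)_\Delta = \sigma(g_\Delta)$. The condition that $dg_\Delta$ has no zero on $(\bC^*)^N$ says that a certain ideal, namely the partials together with $1 - z_1\cdots z_N\, y$, is the unit ideal. This is an algebraic condition, so it is preserved by $\sigma$ (for instance via the Nullstellensatz certificate $1 = \sum a_i p_i$).
\end{itemize}

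So $\sigma(F)$ has a Newton non-degenerate stabilisation. Equivalence is also preserved when $F$ is multiplied by a unit, since a non-zero constant rescaling keeps the Newton polyhedron and the non-degeneracy condition. The converse follows by applying $\sigma^{-1}$.

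\textbf{Main obstacle.} The delicate point is the first step: making precise that the canonical embedding of $A_\sigma$ is the $\sigma$-conjugate of that of $A$. This needs the choice of level/theta structure to be tracked, or else the statement should be read with $\pi$ allowed to absorb a projective change of coordinates. I would handle this by working with the moduli space of abelian varieties with level-$(4,8)$ theta structure, which is defined over $\bQ$ (or a cyclotomic field) and on which theta-nulls are honest coordinates. Galois conjugation then acts on these coordinates and on the Riemann relations compatibly.
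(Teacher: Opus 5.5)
Your proposal is correct and takes essentially the same approach as the paper: both parts rest on the fact that every construction involved is algebraic and therefore commutes with applying $\sigma$ to coefficients (the paper's own proof is a two-line version of exactly this). You fill in details the paper leaves implicit, namely that $\sigma$ carries the formal automorphism $\Phi$ and the quadratic form $Q$ to admissible ones and that elimination commutes with $\sigma$. The theta-structure ambiguity you flag disappears if $A^{\sigma}$ is simply given the base-changed embedding, whose homogeneous ideal is obtained by applying $\sigma$ to the coefficients of the ideal of $A$.
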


\begin{proof}
	The first statement is follows from the discussion anove. For the second, Newton non-degeneracy is defined by non-vanishing of various algebraic equations in the derivatives, and so is preserved by field automorphisms.
\end{proof}

It follows that to have an in-principle algorithm to construct an example of a stably degenerate hypersurface, it suffices to have a way of picking an abelian variety which satisfies our basic constraints, in particular which does not lie on any rational curve in $\scrA_g$. So:

\begin{enumerate}
	\item Mumford \cite{Mumford2} gives equations for a projective embedding of $\scrA_g$, in co-ordinates which are modular functions of periods, so one can take an affine chart $U$ with $\scrA_g  \supset U \subset \bC^N$ (e.g. $N = 4^g-1$) cut out by explicit equations over $\bQ$;
	\item pick a projection defined over $\bQ$, say $\phi:  \bC^N \supset U \to \bC^D$, where $D = \dim_{\bC}(\scrA_g) = g(g+1)/2$ (so $\phi$ is the action on $\bC$-points of a projection $\scrA_g(\bQ) \supset U(\bQ) \to \bQ^D$) so that the map $\phi|_U$ is finite and dominant;
	\item  take distinct primes $\{p_1,\ldots,p_D\}$ and a point $P = (e^{p_1},\ldots, e^{p_D}) \in \bC^D$ with co-ordinates which are algebraically independent over $\bQ$;
	\item the point $P$ must lie in the image of $\phi|_U$ since there exists a point $Q$ in its image with algebraically independent coordinates and so if $\check{\sigma} \in \Aut(\bC)$ sends $Q$ to $P$, then $\check{\sigma}$ sends a point in the preimage of $Q$ to a point in the preimage of $P$; similar reasoning ensures that $\phi|_U^{-1}(P)$ is finite;
	\item therefore an explicit point $[A]$ in $(\phi|_U)^{-1}(P)$ can now be found by Gr\"obner basis / elimination theory techniques; since we have equations for $\scrA_g$ so this is a matter of finding a solution to a zero-dimensional polynomial system due to the fact that $\phi|_U$ is finite; this point has coordinates living in an explicit finite field extension of $\bQ$;
	\item the polynomial $F_{A,\pi,h}$ can then be built explicitly from these coordinates since Mumford \cite{Mumford2} has explicit quadratic equations cutting out $A$ in projective space using theta function values.
\end{enumerate}

Now $F_{A,\pi,h}$ in the proposed algorithm above is stably degenerate for the following reason:
By a slight generalization of Lemma \ref{lemmaconditionsforA}, we can find a point $R$ in the image of $\phi|_U$ with algebraically independent coordinates so that every abelian variety in its preimage satisfies the properties of Lemma \ref{lemmaconditionsforA}.
We choose $\sigma \in \Aut(\bC)$ so that it sends $P$ to $R$.
Hence $[A^\sigma]$ lies in the preimage of $R$.
Therefore by Lemma \ref{lemmastableautomorphism} combined with Theorem \ref{theoremfknotstablydegenerate} and Lemmas \ref{equivalentstandard} and \ref{lemmastabilisationofNND}, we get $F_{A,\pi,h}$ is stably degenerate.

The polynomial $F_{A,\pi,h}$ can be explicitly defined over an explicit finite field extension of $\bQ$. However, if we wish to write down its coefficients as elements of $\bC$, then we have to use numerical approximation techniques to embed this finite field extension into $\bC$.

Gr\"obner basis algorithms for ideals in $N$ variables run, in a worst case scenario, in doubly exponential time (and may turn quadratic relations into ones defined by polynomials of exponential in $N$ degree), so implementing this is far from practical. Moreover, since $\scrA_g$ is cut out by at least $4^g-g(g+1)/2-1$ quadratics, and $g \geq 16$, the degree of the hypersurface birational to the abelian variety $A$ is huge, and any eventual polynomial arising from this method would likely have millions of monomial terms.

\begin{remark}
	It is natural to ask if the singularity constructed after Lemma \ref{lem:add to make isolated},
	or via the algorithm above, after stabilisation and coordinate change, can be deformed through a $\mu = \text{constant}$ family to a nearby Newton non-degenerate one.
	It seems likely that one can prove this is not possible by combining the methods of this paper with
	Floer theory. In particular one would use the proposed isomorphism $H^*_c(\chi_{f_\tau}) \cong HF^*(\phi_{f_\tau}^d)$, with $\phi_{f_\tau}$ the monodromy of $f_\tau$ and $d$ the contact order of the jets, as in  \cite[Theorem 7.2.69]{McLean:arcsurvey},
	together with ideas from \cite{fernandez2024symplectic}
	to show that $(R\pi_{f_T})_!\bQ_{\chi_{f_T}}$ is locally constant for a $\mu = \text{constant}$ deformation $(f_\tau)_{\tau \in T}$. This would give a strengthening of the main result.
\end{remark}

\bibliographystyle{alpha}
\bibliography{references}

\end{document}